\documentclass[submission,copyright,creativecommons]{eptcs}

\usepackage{underscore}           
\usepackage{tikz-cd}
\usepackage{amsmath}
\usepackage{amsthm}
\usepackage{amssymb}
\usepackage{stmaryrd}

\newtheorem{theorem}{Theorem}[section]

\newtheorem{lemma}[theorem]{Lemma}

\newtheorem{proposition}[theorem]{Proposition}

\theoremstyle{definition}

\newtheorem{definition}[theorem]{Definition}

\newenvironment{example}
  {\pushQED{\qed}\examplex}{\popQED\endexamplex}


\newcommand{\mc}[1]{\mathcal{#1}}

\newcommand{\mb}[1]{\mathbf{#1}}

\newcommand{\ms}[1]{\mathsf{#1}}

\newcommand{\mr}[1]{\mathrm{#1}}

\newcommand{\set}[1]{\{#1\}}
\newcommand{\geo}[1]{\left|#1\right|}

\newcommand{\scomp}[2]{ \{\, #1 \mid #2 \,\}}

\newcommand{\op}{^{\mr{op}}}

\newcommand{\id}{\operatorname{id}}
\newcommand{\pair}[1]{\langle #1 \rangle}

\newcommand{\nt}{\Rightarrow}

\newcommand{\eff}{\Leftrightarrow}

\newcommand{\ass}[1]{\llbracket#1\rrbracket} 

\newcommand{\dg}{^\dagger}

\newcommand{\inv}{^{-1}}

\newcommand{\hook}{\hookrightarrow}


\renewcommand{\lim}{\operatorname{Lim}}

\newcommand{\Set}{\mathbf{Set}}

\newcommand{\Top}{\mathbf{Top}}

\newcommand{\Cat}{\mathbf{Cat}}

\newcommand{\Pre}{\mathbf{Pre}}

\newcommand{\SupL}{\mathbf{SupL}}
\newcommand{\InfL}{\mathbf{InfL}}

\newcommand{\CABAO}{\mb{CABAO}}

\newcommand{\upa}{^+}


\newcommand{\Kr}{\mb{Kr}}

\newcommand{\Nb}{\mb{Nb}}

\newcommand{\Evl}{\mb{Evl}}
\newcommand{\Eqv}{\mb{Eqv}}

\newcommand{\mta}{^{\Sigma}}
\newcommand{\assv}[1]{\ass{#1}^V}
\newcommand{\PAL}{^{\mb{PAL}}}

\newcommand{\PRO}{^\mb{PRO}}

\title{Unification of Modal Logic via Topological Categories}
\author{Lingyuan Ye
\institute{Tsinghua University \\ Beijing, China}
\email{ye.lingyuan.ac@gmail.com} }

\begin{document}
\maketitle

\begin{abstract}
  In this paper we provide a unifying description of different types
  of semantics of modal logic found in the literature via the
  framework of topological categories. In the style of categorical
  logic, we establish an exact correspondence between various
  syntactic extensions of modal logic on one hand, including modal
  dependence, group agent structures, and logical dynamic, and
  semantic structures in topological categories on the other
  hand. This framework provides us a uniform treatment of interpreting
  these syntactic extensions in all different types of semantics of
  modal logic, and it deepens our conceptual understanding of the
  abstract structure of modal logic.
\end{abstract}

\section{Introduction}
\label{sec:intro}

Throughout the history of modal logic, many different types of
semantics have been developed to interpret the modal language, with
various applications in mind. Starting from the seminal work by von
Wright~\cite{vonwright1954essay} and the later extension by Hintikka
in~\cite{hintikka1962knowledge}, the Kripkean style semantics of modal
logic has been widely applied in the philosophical study of
epistemology.  Tarski and McKinsey in~\cite{mckinsey1944algebra} have
also discovered that the interior operator induced by a topological
space could be used to interpret modal formulas as well, which
naturally finds its connection with propositional intuitionistic
logic. Other variations include neighbourhood semantics for modal
logic, first suggested by Scott in~\cite{scott1970advice} in order to
study certain non-normal fragments of modal logic. Finally, we also
have semantics of a more algebraic flavour, extending the usual
algebraisation of propositional logic using Boolean algebras.

These various forms then naturally bear the following question: Is it
possible to provide a \emph{unifying} description of all types of
semantic models of modal logic? To provide a positive answer, this
paper starts with the following observation: In all of the above
mentioned examples, in fact in many more cases, the categories of
semantics of modal logic all organise themselves into
\emph{topological categories} (over $\Set$).

The notion of a topological category is introduced
in~\cite{adamek1990abstract}, with the aim of axiomatising the
structure of those categories containing objects $X$ equipped with
certain \emph{geometric data}, with $X$ living in an ambient category
$\mb X$. This results in the notion of topological categories over an
arbitrary base $\mb X$. For our purpose though, we will exclusively
work over $\Set$, and this is our default for topological categories
henceforth. The prototypical example is $\Top$, the category of
topological spaces, whose objects are sets equipped with a topology.
We will give an overview of topological categories in
Section~\ref{sec:preliminary}, and provide another equivalent way of
describing topological categories more suitable for modal logic
(cf.~Theorem\ref{thm:topocatindex}). According to this theorem, it can
then be immediately recognised that all the mentioned examples of
semantics conform to such a description: Kripke models are sets
equipped with a binary relation, which are often depicted
diagrammatically. We've already mentioned topological spaces, and
neighbourhood models are no exceptions. Perhaps surprisingly, a
particular style of algebraic semantics, using \emph{complete atomic
  Boolean algebra with operators} (CABAO), can also be recognised as
topological or geometrical over $\Set$, once we take its dual
category. This is arguably an incarnation of the duality principle
between algebra and geometry within the context of modal logic. We
will prove in Proposition~\ref{prop:alltopocat} that all these types
of semantics, and in fact much more, are instances of topological
categories, hence building the foundations of unification.

But such fact alone is far from convincing that this is a good
framework for unifying modal logic. The more important topic is how
the semantic structures of topological categories would explain the
various logical features that are present in a modal context. In this
paper, we will follow the philosophy of categorical logic,
establishing \emph{exact correspondences} between different syntactic
patterns of modal logic with semantic structures of topological
categories. Such correspondences are witnessed by considering
transformation of models, viz. functors between topological
categories.

The first thing to explain is the interpretation of modalities. As we
will see in more detail in Section~\ref{sec:modality}, it is precisely
the geometric data of a topological category that is responsible for
its interpretation. Furthermore, the structure of topological
categories also connects tightly with many other \emph{extensions} of
basic modal logic studied in the literature, including the
multi-agency, group agency, modal dependence, logical dynamics,
etc.. For each of these reasoning patterns we have established
theorems (see
Theorem~\ref{thm:modalfuncpreserveL},~\ref{thm:conservextenDSigma}
and~\ref{thm:preserveproducttypeupdate}), showing that functors
preserve certain structures of topological categories if, and only if,
the linguistic interpretation of the corresponding fragment of modal
logic remains unchanged under the transformation. These results
significantly improve our conceptual understanding of modal logic, and
will be the main topics of Section~\ref{sec:group}
and~\ref{sec:dynamics}.

To the best knowledge of the author, in the current literature there
has been no theoretic framework to enable all these different
fragments of modal logic to be described in a uniform way for all
types of semantics. Our systematic approach allows seamless
generalisation of all these constructions in modal logic to any other
semantics. For instance, it has been actively discussed what is the
corresponding notion of common knowledge in topological
semantics~\cite{topoapproach2019}, how to extend different forms of
logical dynamics to wider contexts~\cite{van2011dynamic}, or how to
develop modal dependence described in~\cite{baltag2021simple}
and~\cite{baltag2021lcd} for other semantic types. Our work provides a
novel answer to all these different questions by accommodating them to
the framework of topological categories, and it has ample potential
applications. 

\section{Preliminaries}
\label{sec:preliminary}
In many existing texts,
e.g. in~\cite{adamek1990abstract,hofmann2014monoidal}, topological
categories are usually introduced as \emph{fibrations} over $\Set$
satisfying certain lifting properties. It is well-known from the
Grothendieck construction that fibrations can be equivalently
described by \emph{indexing categories}, or functors mapping out of
$\Set$. For our purpose, it is this equivalent \emph{indexing} point
of view of topological categories that is more suitable for making
connections with modal logic. We will discuss this in more detail
below.

Recall that a \emph{concrete category}, or a \emph{construct}, is
simply a faithful functor $U : \mc A \to \Set$. When it is clear from
the context what the functor $U$ is, we will simply refer to $\mc A$
as a concrete category.

\begin{example}
\label{exam:examcat} We take this opportunity to introduce the main
examples of category of semantics:
  \begin{itemize}
  \item $\Kr$ denotes the category of Kripke frames, whose objects are
    sets equipped with a binary relation on them, with morphisms being
    monotone maps. It has certain useful full subcategories including
    $\Pre$ and $\Eqv$, whose objects only contains preorders or
    equivalence relations.

  \item We've mentioned that $\Top$ will denote the category of
    topological spaces.

  \item $\Nb$ is the category of neighbourhood frames, whose objects
    are sets $X$ equipped with a neighbourhood relation
    $E \subseteq X \times \wp(X)$, and whose morphisms
    $f : (X,E) \to (Y,F)$ are functions from $X$ to $Y$ satisfying a
    continuity condition: For any $x\in X$ and $V \subseteq Y$,
    $fx F V \nt x E f\inv V$.

  \item We let objects of $\CABAO$ be pairs $(X,m)$ with $m$ being an
    arbitrary endo-function on $\wp(X)$, and morphisms
    $f : (X,m) \to (Y,n)$ are functions from $X$ to $Y$ satisfying
    $f\inv \circ n \subseteq m \circ f\inv$, where we extend the order
    $\subseteq$ on $\wp(X)$ point-wise to the function space
    $\wp(X)^{\wp(Y)}$.\footnote{This definition of the category
      $\CABAO$ contains certain subtle points, which we will explain
      in a minute.}

  \item Besides models of the above form, to interpret modal formulas
    we also need evaluation functions to interpret propositional
    letters. For a fixed set $\ms P$ of propositional variables, we
    introduce the category $\Evl$ of evaluations, whose objects are
    pairs $(X,V)$ with $V : \ms P \to \wp(X)$, and morphisms
    $f : (X,V) \to (Y,W)$ are functions from $X$ to $Y$ satisfying
    $V \subseteq f\inv \circ W$, where similarly the order is the
    point-wise extension of the subset relation on the function space
    $\wp(X)^\ms P$.
  \end{itemize}
  In each case, there is an evident forgetful functor to $\Set$ that
  identifies them as concrete categories.
\end{example}

Let us say a few more words on the category $\CABAO$. From a
well-known theorem of Tarski, we know every CABA is isomorphic to a
power set algebra $\wp(X)$ (and every power set algebra is a CABA),
and every morphism between them is of the form
$f\inv : \wp(Y) \to \wp(X)$ for some function $f : X \to Y$. Hence,
our definition of a CABAO as a pair $(X,m)$ does not lose anything,
and it builds in the duality, since it uses $f$, rather than $f\inv$,
as morphisms. Notice that the morphisms we choose between CABAOs are
not the \emph{algebraic} ones, which should commute with the operators
on both sides, but \emph{lax} ones that only require an inequality. A
possible intuition for this choice is to read the operators $m,n$ as
interior operators induced by a topology, and the above continuity
condition is exactly saying that $f$ is a continuous map for the two
topological spaces. We will see later that such a choice makes
$\CABAO$ topological over $\Set$.

There is also an accompanying notion of \emph{concrete functors}
between concrete categories: A functor $F$ between two concrete
categories $(\mc A,\geo-_\mc A)$ and $(\mc B,\geo-_\mc B)$ is a
concrete functor iff it commutes with the forgetful functors, i.e. iff
it preserves the underlying sets. Obviously, each forgetful functor of
$\geo-$ of a construct $\mc A$ constitute a concrete functor from
$(\mc A,\geo-)$ to $(\Set,1_\Set)$, which establish $\Set$ as the
terminal object in the (large) category of concrete categories and
concrete functors.

The faithfulness of the forgetful functor of a concrete category has
many consequences. For any construct $(\mc A,\geo-)$, we will identify
the Hom-sets $\mc A(A,B)$ simply as subsets of $\Set(\geo A,\geo B)$,
and say a function $f : \geo A \to \geo B$ is an
\emph{$\mc A$-morphism} if it belongs to $\mc A(A,B)$. For instance,
$f$ is a $\Top$-morphism if it is continuous. Faithfulness of $\geo-$
also implies that each fibre $\mc A_X$ over a set $X$ is a (possibly
large) preorder --- recall that a morphism in $\mc A_X$ is a morphism
in $\mc A$ above $\id_X$. If each fibre is indeed small, then we say
the construct $\mc A$ is \emph{fibre-small}. It is easy to verify that
all the introduced categories in Example~\ref{exam:examcat} have small
fibres. All the constructs considered in the future will be
fibre-small.

As mentioned, topological categories are constructs that satisfy
certain lifting properties. For any construct $(\mc A,\geo-)$, a
\emph{structured source} is defined to be a set of functions of the
form $\set{f_i : X \to \geo{A_i}}_{i\in I}$,\footnote{If we don't
restrict to fibre-small constructs, then we need to consider
structured sources whose size are \emph{proper classes}. However, this
is not a problem for us to worry about. We refer the readers
to~\cite{adamek1990abstract} for more details.} where each $A_i\in\mc
A$. An \emph{initial lift} of such a structured source is an object
$A$ in the fibre $\mc A_X$, satisfying the following universal
properties: For any function $g : \geo B \to \geo A$, $g$ is an $\mc
A$-morphism iff $f_i \circ g : \geo B \to \geo{A_i}$ is an $\mc
A$-morphism for any $i\in I$. Evidently, initial lifts are identified
up to isomorphisms in the fibre $\mc A_X$.

\begin{definition}[Topological Categories]
  A construct $(\mc A,\geo-)$ is a \emph{topological category} if
  every structured source has a \emph{unique} initial lift.
\end{definition}

We can break the definition of a topological category into two parts:
It first requires the \emph{existence} of initial lifts of structured
sources, and it also requires the \emph{uniqueness} of such lifts. The
notion of initial lift of structured source is a generalisation of
cartesian lifts for Grothendieck fibrations. In fact, cartesian lift
is exactly initial lift for a \emph{singleton structured source},
viz. a structured source consisting of only one function. This in
particular suggests that topological categories are special types of
\emph{fibrations} where we can perform lifts against an arbitrary set
of morphisms with a common codomain. Together with the uniqueness part
of the definition, a topological category satisfies many desirable
properties:\footnote{The following two lemmas are both contained in
  \cite{adamek1990abstract}. We include the proof here for the
  convenience of the readers.}

\begin{lemma}
  If $\mc A$ is a topological category, then each fibre $\mc A_X$ is a
  complete lattice for any set $X$.
\end{lemma}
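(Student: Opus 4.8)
The plan is to exploit the universal property of initial lifts applied to the structured source built entirely out of identity functions, since every object we care about lives in the same fibre $\mc A_X$ and hence has underlying set exactly $X$. Before constructing meets, I will first upgrade the observation from the excerpt that $\mc A_X$ is a preorder to the statement that it is a genuine partial order: the \emph{uniqueness} clause in the definition of a topological category forces antisymmetry. Indeed, suppose $A \le B$ and $B \le A$ in $\mc A_X$, so that both $\id_X : \geo A \to \geo B$ and $\id_X : \geo B \to \geo A$ are $\mc A$-morphisms. Composing with these identity morphisms shows that $A$ and $B$ admit exactly the same $\mc A$-morphisms into them, so both are initial lifts of the structured source $\set{\id_X : X \to \geo A, \id_X : X \to \geo B}$; uniqueness then gives $A = B$. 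Since a partial order is a complete lattice as soon as all infima exist (the supremum of a subset being the infimum of its upper bounds), it suffices to produce arbitrary meets.

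Given a family $\set{A_i}_{i\in I}$ of objects in $\mc A_X$, I note that $\geo{A_i} = X$ for every $i$, so I may form the structured source $\set{\id_X : X \to \geo{A_i}}_{i\in I}$. Let $A \in \mc A_X$ be its initial lift, which exists because $\mc A$ is topological. I claim that $A$ is the meet $\bigwedge_{i} A_i$. That $A$ is a lower bound is immediate from the universal property: applying it to $g = \id_X : \geo A \to \geo A$, which is trivially an $\mc A$-morphism, shows that each $\id_X : \geo A \to \geo{A_i}$ is an $\mc A$-morphism, i.e. $A \le A_i$ for all $i$. That $A$ is the \emph{greatest} lower bound follows by running the universal property in the other direction: if $B \in \mc A_X$ satisfies $B \le A_i$ for all $i$, then each $\id_X : \geo B \to \geo{A_i}$ is an $\mc A$-morphism, so the universal property applied to $g = \id_X : \geo B \to \geo A$ yields that $\id_X : \geo B \to \geo A$ is an $\mc A$-morphism, that is, $B \le A$.

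Taking the infimum of the empty family supplies a top element, so $\mc A_X$ has all meets and is therefore a complete lattice. The argument is essentially a bookkeeping exercise once the correct structured source is identified, and the real content is simply that initiality for identity-based sources coincides with the order-theoretic universal property of a meet. The only genuinely delicate point is the passage from preorder to poset: here the uniqueness half of the topological-category axiom is indispensable, since without it one would obtain a complete lattice only up to fibrewise isomorphism. I would therefore take care to state the antisymmetry step explicitly rather than dismiss it as routine.
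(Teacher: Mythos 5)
Your proof is correct and follows the same route as the paper: form the structured source $\set{1_X : X \to \geo{A_i}}_{i\in I}$ and check that its unique initial lift is the meet, with the empty source giving the top element. The paper leaves the verification (including the antisymmetry point, which it notes separately in the surrounding discussion) as "routine"; you have simply written it out, and your details are accurate.
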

\begin{proof}
  For any family $\set{A_i}_{i\in I}$ in the fibre $\mc A_X$, consider
  the structured source, $\set{1_X : X \to \geo{A_i}}_{i\in I}$. It is
  routine to verify that its unique initial lift is precisely the meet
  of this family in $\mc A_X$.
\end{proof}

The existence of initial lifts guarantees each fibre to be complete
preorders, and the uniqueness then implies that they are indeed
posets. As a fibration, given any function $f : X \to Y$, the initial
lifts along $f$ will induce functions of the form $f^* : \mc A_Y \to
\mc A_X$. Again, $f^*$ being a well-defined function is guaranteed by
the uniqueness of initial lifts, and we will also denote maps of the
form $f^*$ as \emph{pullback maps}. Furthermore, uniqueness also
suggests that the fibration \emph{splits}, in the sense that $1_X^* =
1_{\mc A_X}$ and $g^*f^* = (gf)^*$. The more important observation is
that each pullback map preserves meets in the fibre:

\begin{lemma}
  Let $(\mc A,\geo-)$ be a topological category, then for any function
  $f : X \to Y$, the pullback map $f^* : \mc A_Y \to \mc A_X$
  preserves arbitrary meets.
\end{lemma}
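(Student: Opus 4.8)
The plan is to reduce everything to the universal property of initial lifts, exploiting the fact -- already established in the two proofs above -- that both meets in a fibre and pullback maps are computed as such lifts. Concretely, for a family $\set{A_i}_{i\in I}$ in $\mc A_Y$ its meet $\bigwedge_{i\in I} A_i$ is the initial lift of the structured source $\set{1_Y : Y \to \geo{A_i}}_{i\in I}$, while for any $A\in\mc A_Y$ the pullback $f^*A$ is the initial lift of the singleton source $\set{f : X \to \geo A}$. My goal is therefore to exhibit $f^*\!\left(\bigwedge_{i\in I} A_i\right)$ and $\bigwedge_{i\in I} f^*A_i$ as initial lifts of one and the same structured source over $X$; the uniqueness clause in the definition of a topological category then forces the two objects to coincide in $\mc A_X$, which is exactly preservation of meets.

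The crux, which I would isolate as a preliminary claim, is a transitivity property of initial lifts: if $B\in\mc A_X$ is the initial lift of $\set{g_i : X \to \geo{B_i}}_{i\in I}$ and each $B_i$ is itself the initial lift of a structured source $\set{h_{ij} : \geo{B_i} \to \geo{C_{ij}}}_{j\in J_i}$, then $B$ is the initial lift of the composite source $\set{h_{ij}\circ g_i : X \to \geo{C_{ij}}}_{i\in I,\, j\in J_i}$. This follows by chaining the defining biconditionals: for any $k : \geo D \to \geo B = X$, the source property of $B$ gives that $k$ is an $\mc A$-morphism iff each $g_i\circ k$ is one, and the source property of $B_i$ gives that $g_i\circ k$ is an $\mc A$-morphism iff each $h_{ij}\circ g_i\circ k$ is one; composing these two equivalences yields precisely the universal property characterising the initial lift of the composite source. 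Verifying this chain is the only genuine work, and it is entirely formal; I expect it to be the main (though not difficult) step.

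With transitivity in hand the two computations fall out symmetrically. For the left-hand side, $f^*\!\left(\bigwedge_i A_i\right)$ is the singleton lift of $\set{f : X \to \geo{\bigwedge_i A_i} = Y}$ sitting over the meet, which is the lift of $\set{1_Y : Y \to \geo{A_i}}_{i\in I}$; applying transitivity identifies it with the initial lift of $\set{1_Y\circ f : X \to \geo{A_i}}_{i\in I} = \set{f : X \to \geo{A_i}}_{i\in I}$. For the right-hand side, each $f^*A_i$ is the singleton lift of $\set{f : X \to \geo{A_i}}$ and, since $\geo{f^*A_i} = X$, the meet $\bigwedge_i f^*A_i$ is the lift of $\set{1_X : X \to \geo{f^*A_i} = X}_{i\in I}$; applying transitivity once more identifies it with the initial lift of $\set{f\circ 1_X : X \to \geo{A_i}}_{i\in I} = \set{f : X \to \geo{A_i}}_{i\in I}$. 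The two structured sources are literally equal, so uniqueness of initial lifts gives $f^*\!\left(\bigwedge_i A_i\right) = \bigwedge_i f^*A_i$, completing the argument.
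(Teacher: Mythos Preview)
Your proof is correct, but it is organised differently from the paper's. The paper argues only the non-trivial inequality $\bigwedge_i f^*A_i \le f^*\bigwedge_i A_i$ directly, unwinding the universal properties of $f^*\bigwedge_i A_i$ and of $\bigwedge_i A_i$ in sequence until the claim reduces to the evident facts that $\bigwedge_i f^*A_i \le f^*A_i$ and that $f : \geo{f^*A_i}\to\geo{A_i}$ is an $\mc A$-morphism; the reverse inequality is left implicit (it follows from monotonicity of $f^*$). You instead isolate a reusable transitivity lemma for initial lifts and then exhibit both $f^*\bigwedge_i A_i$ and $\bigwedge_i f^*A_i$ as initial lifts of the very same structured source $\set{f:X\to\geo{A_i}}_{i\in I}$, concluding by uniqueness. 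Your route is more symmetric and yields equality in one stroke, and the transitivity lemma is a clean general principle worth having; the paper's route is slightly shorter on the page since it avoids stating an auxiliary lemma and relies on the (obvious) monotonicity of $f^*$ for the easy half. At bottom both arguments chain the same two universal properties --- that of $f^*$ and that of fibrewise meets --- so the difference is one of packaging rather than of idea.
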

\begin{proof}
  For any family $\set{B_i}_{i\in I}$ in $\mc A_Y$, we only need to
  prove $\bigwedge_{i\in I}f^{*}B_i \le f^{*}\bigwedge_{i\in
    I}B_i$. By definition, this holds iff the identity function,
  viewed as a map
  $1_X : \geo{\bigwedge_{i\in I}f^*B_i} \to \geo{f^*\bigwedge_{i\in
      I}B_i}$, is an $\mc A$-morphism. By the universal property of
  initial lift, it is so iff
  $f \circ 1_X = f : \geo{\bigwedge_{i\in I}f^*B_i} \to
  \geo{\bigwedge_{i\in I}B_i}$ is an $\mc A$-morphism, and again, this
  is furthermore equivalent to all the maps in the structured source
  $\left\{f : \geo{\bigwedge_{i\in I}f^*B_i} \to \geo{B_i}\right\}$
  being $\mc A$-morphisms. However, we know that
  $\bigwedge_{i\in I}f^*B_i \le f^*B_i$ for any $i\in I$, which means
  both $1_X : \geo{\bigwedge_{i\in I}f^*B_i} \to \geo{f^*B_i}$ and
  $f : \geo{f^*B_i} \to \geo{B_i}$ are $\mc A$-morphisms, hence so is
  the composite. 
\end{proof}

It follows that each pullback map $f^*$ has a unique left adjoint,
which we denote as $f_!$ and call it the \emph{pushforward map}. By
the adjunction $f_! \dashv f^*$ and the universal property of initial
lift, it is easy to see that $f_!$ are exactly describing the
cocartesian lifts, which makes a topological category an
\emph{opfibration} as well, hence a \emph{bifibration}. As
Theorem~\ref{thm:topocatindex} will show, the data of fibres and
pullback or pushforward maps uniquely determines a topological
category:
\begin{theorem}
  \label{thm:topocatindex}
  Let $\InfL$ (resp. $\SupL$) be the category of inflattices
  (suplattices).\footnote{$\InfL$ (resp. $\SupL$) is the category of
    complete lattices with meet (resp. join) preserving maps. For more
    detailed description of various categorical structures on $\InfL$
    or $\SupL$, we refer the readers to~\cite[Chapter
    I]{joyal1984extension}.} Recall that they are canonically dual to
  each other. The data of a topological category $(\mc A,\geo-)$ is
  the same as the data of a functor $\mc A_{(-)} : \Set\op \to \InfL$,
  or equivalently $\mc A_{-} : \Set \to \SupL$.
\end{theorem}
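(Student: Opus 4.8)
The plan is to exhibit a bijective correspondence between the data of a topological category and the data of a functor $\mc A_{(-)} : \Set\op \to \InfL$, by passing through the Grothendieck-style reconstruction in both directions. One direction is essentially already assembled in the excerpt: starting from a topological category $(\mc A,\geo-)$, the preceding lemmas tell us that each fibre $\mc A_X$ is a complete lattice (an object of $\InfL$), and that each function $f : X \to Y$ induces a pullback map $f^* : \mc A_Y \to \mc A_X$ preserving arbitrary meets (a morphism of $\InfL$). The splitting property $1_X^* = 1_{\mc A_X}$ and $g^*f^* = (gf)^*$ noted just before the theorem says precisely that $X \mapsto \mc A_X$, $f \mapsto f^*$ is functorial and contravariant, hence a genuine functor $\Set\op \to \InfL$. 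So the first step is just to package these three already-established facts into a functor.

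The substantive direction is the reverse: given a functor $P := \mc A_{(-)} : \Set\op \to \InfL$, I would reconstruct a topological category and check it satisfies the defining lifting property. The plan is to define $\mc A$ via a Grothendieck construction: objects are pairs $(X, A)$ with $A \in P(X)$, and a morphism $(X,A) \to (Y,B)$ is a function $f : X \to Y$ such that $A \le f^* B$ in the fibre $P(X)$, where $f^* := P(f)$. The forgetful functor $\geo{(X,A)} = X$ is then visibly faithful, since a morphism is determined by its underlying function, so $(\mc A,\geo-)$ is a concrete category whose fibre over $X$ is the preorder $P(X)$ — which is in fact a poset since $P(X)$ is a complete lattice. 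The key computation is to verify that every structured source $\{f_i : X \to \geo{A_i}\}_{i\in I}$, with $A_i \in P(X_i)$, has a unique initial lift. I would define the candidate lift to be $A := \bigwedge_{i\in I} f_i^* A_i \in P(X)$, using that each $f_i^* = P(f_i)$ is meet-preserving and that $P(X)$ is complete so the meet exists. Then I must check the universal property: for $g : \geo B \to X$ with $B \in P(Z)$, that $g$ is an $\mc A$-morphism into $(X,A)$ iff each $f_i \circ g$ is an $\mc A$-morphism into $A_i$. This unwinds to the equivalence between $B \le g^* A = g^*\bigwedge_i f_i^* A_i$ and $B \le (f_i g)^* A_i$ for all $i$, which follows from meet-preservation of $g^*$ together with functoriality $g^* f_i^* = (f_i g)^*$. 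Uniqueness of the lift is immediate because the fibre $P(X)$ is a poset, so two objects satisfying the same universal property are equal, not merely isomorphic.

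Finally, I would verify that the two constructions are mutually inverse, so that the correspondence is genuinely a bijection of data rather than merely an equivalence. Going from a topological category to a functor and back should recover the original category up to the canonical identification of objects with fibre elements, and this uses that in a topological category every object $A \in \mc A_X$ is the initial lift of its own structured source of $\mc A$-morphisms, so the order on each fibre and the action of pullback maps are faithfully captured by $P$. Going from a functor to a category and back recovers $P$ on objects and morphisms by construction. The equivalence with $\mc A_{-} : \Set \to \SupL$ is then a formal consequence of the stated canonical duality $\InfL\op \simeq \SupL$: postcomposing $\mc A_{(-)} : \Set\op \to \InfL$ with this duality yields a functor $\Set\op \to \SupL\op$, i.e. $\Set \to \SupL$, and concretely this sends $f$ to the pushforward $f_! \dashv f^*$ introduced just before the theorem.

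I expect the main obstacle to be the careful bookkeeping in the initial-lift verification, specifically ensuring that the meet $\bigwedge_i f_i^* A_i$ genuinely satisfies the \emph{iff} in both directions of the universal property, where the meet-preservation lemma is used essentially and cannot be weakened to mere monotonicity. A secondary subtlety worth flagging is size: the structured source may be indexed by a proper class in the non-fibre-small setting, so one must know that $P(X)$ admits meets of class-indexed families; since we restrict throughout to fibre-small constructs and $\InfL$ consists of genuinely complete lattices, this is handled, but it should be acknowledged rather than silently assumed.
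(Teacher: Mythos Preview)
Your proposal is correct and follows essentially the same route as the paper: both directions of the correspondence are handled exactly as you describe, with the Grothendieck-style construction and the initial lift $\bigwedge_{i} f_i^* A_i$ verified via meet-preservation and functoriality. Your treatment is in fact more explicit than the paper's on the mutual-inverse check, the $\SupL$ side, and the size caveat, all of which the paper either leaves to the reader or handles by an earlier footnote.
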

\begin{proof}
  We've already shown that a topological category induces a functor
  from $\Set\op$ to $\InfL$. On the other hand, since $\InfL$ is a
  subcategory of $\Cat$, any functor $F : \Set\op \to \InfL$ admits a
  Grothendieck construction, resulting in a fibration
  $p : \mc F \to \Set$. The objects of $\mc F$ are pairs $(X,A)$ with
  $A$ being an element in $F(X)$; a morphism $f : (X,A) \to (Y,B)$ is
  a function $f : X \to Y$, such that $A \le Ff(B)$. The forgetful
  functor $p$ is evident. To this end, we only need to verify that for
  arbitrary structured source $\set{f_i : X \to p(X_i,A_i)}_{i\in I}$,
  it has a unique initial lift, which we claim is given by
  $\bigwedge_{i\in I}(Ff_i)(A_i)$ over $X$. For any function
  $g : p(Y,B) \to p(X,\bigwedge_{i\in I}(Ff_i)(A_i))$, by definition
  it is an $\mc F$-morphism iff
  \[ B \le Fg\bigwedge_{i\in I}(Ff_i)(A_i) = \bigwedge_{i\in
      I}F(f_i\circ g)(A_i) \eff \forall i\in I[F(f_i \circ g)B \le
    A], \]
  which exactly means that all $f_i \circ g : p(Y,B) \to p(X_i,A)$ are
  $\mc F$-morphisms. Hence, $(\mc F,p)$ is a topological category, and
  we leave the readers to verify that the above two processes are
  mutually inverse.
\end{proof}

\begin{proposition}
  \label{prop:alltopocat}
  All the categories of semantics mentioned in
  Example~\ref{exam:examcat} are topological categories.
\end{proposition}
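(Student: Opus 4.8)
The plan is to verify the definition of a topological category directly for each construct of Example~\ref{exam:examcat}, namely that every structured source $\set{f_i : X \to \geo{A_i}}_{i\in I}$ has a unique initial lift. \emph{Uniqueness} costs nothing here: in each case the fibre $\mc A_X$ is an honest (antisymmetric) poset --- a sub-poset of one of the complete lattices $\wp(X\times X)$, $\wp(X\times\wp(X))$, $\wp(X)^{\ms P}$, $\wp(X)^{\wp(X)}$, or the lattice of topologies on $X$, taken under the (pointwise) $\subseteq$ or its reverse --- so any two initial lifts of a given source, being isomorphic in the fibre, literally coincide. Hence only the \emph{existence} of initial lifts must be produced. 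In every case I would follow the recipe exposed in the proof of Theorem~\ref{thm:topocatindex}: pull each $A_i$ back along $f_i$ and form the fibrewise meet $\bigwedge_{i\in I} f_i^* A_i$. Verifying this recipe for a construct $\mc A$ is the same as exhibiting the functor $\mc A_{(-)} : \Set\op \to \InfL$ that Theorem~\ref{thm:topocatindex} asks for.

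For the constructs whose fibre carries the forward order the pullback and meet are transparent. In $\Kr$, with $A_i = (Y_i,S_i)$, the initial lift is $(X,R)$ where $R = \bigcap_{i\in I}(f_i\times f_i)\inv S_i$; since preimage along $h\times h$ commutes with intersection and $(f_i g)\times(f_i g) = (f_i\times f_i)(g\times g)$, a map $g$ into $(X,R)$ is monotone iff every $f_i\circ g$ is, which is the universal property. As intersections of preorders (resp.\ equivalence relations) are again such, and these classes are stable under preimage along $h\times h$, the identical formula lands in the full subcategories $\Pre$ and $\Eqv$, so these are topological as well. For $\Evl$ the initial lift of $\set{f_i : X\to(Y_i,W_i)}$ is $(X,V)$ with $V(p)=\bigcap_{i\in I}f_i\inv(W_i(p))$. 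The space case $\Top$ is classical: the fibre is the complete lattice of topologies on $X$, and the initial lift of $\set{f_i : X\to(Y_i,\tau_i)}$ is the initial topology with subbasis $\bigcup_{i\in I}\set{f_i\inv U : U\in\tau_i}$, whose universal property is the standard one.

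The genuine obstacle is $\CABAO$, where the fibre order is \emph{reversed} ($m\le m'$ iff $m'\subseteq m$ pointwise) and the morphism condition $f\inv\circ n\subseteq m\circ f\inv$ is a lax inequality between composites rather than an inclusion of relations. Here I would first construct the pullback map $f^* : \CABAO_Y\to\CABAO_X$, taking $f^* n$ to be the pointwise-smallest endofunction $m$ of $\wp(X)$ with $f\inv\circ n\subseteq m\circ f\inv$, explicitly $m(S)=\bigcup\set{f\inv(nV) : V\subseteq Y,\ f\inv V = S}$ for $S$ in the image of $f\inv$ and $m(S)=\emptyset$ otherwise; the initial lift of a source is then the fibrewise meet $\bigwedge_{i\in I} f_i^* n_i$, i.e.\ the pointwise union $S\mapsto\bigcup_{i\in I}(f_i^* n_i)(S)$. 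The delicate point --- exactly the subtlety flagged in the discussion of $\CABAO$ --- is that the value of $m$ is forced only on the $f$-saturated sets $f\inv V$, so one must check both that the recipe is well defined and that it satisfies the \emph{initial}-lift universal property, and not merely that it makes each $f_i$ a morphism; disentangling the reverse order from the two directions of the lax inequality is where the real care lies. The neighbourhood construct $\Nb$ follows the same pattern: unwinding its continuity condition places its fibre $\wp(X\times\wp(X))$ in the reverse inclusion order and presents the initial lift of $\set{f_i : X\to(Y_i,F_i)}$ as the fibrewise meet $\bigcup_{i\in I}\set{(x,f_i\inv V) : f_i x\mathrel{F_i} V}$, whose universal property is verified just as for $\CABAO$. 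This exhausts Example~\ref{exam:examcat}.
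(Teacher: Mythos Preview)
Your approach is essentially the paper's: both proofs invoke Theorem~\ref{thm:topocatindex} and then exhibit, for each construct, the fibre lattice together with the pullback (or pushforward) maps, leaving the routine verifications to the reader; your formulas for $\Kr$, $\Pre$, $\Eqv$, $\Top$, $\Nb$ and $\Evl$ agree with the paper's. The one place you work harder than necessary is $\CABAO$: the paper observes that here it is \emph{easier to describe the pushforward}, namely $f_!m = \forall_f \circ m \circ f\inv$, which is a clean closed formula, whereas your explicit pullback $f^*n$ requires the case split on whether $S$ lies in the image of $f\inv$ and the union over all $V$ with $f\inv V = S$ --- correct, but avoidably delicate, and the paper's choice sidesteps exactly the ``disentangling'' you flag as the real care.
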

\begin{proof}[Proof Sketch.]
  It is evident that all the fibres of those mentioned examples are
  complete lattices. We only describe in each case how the pullback or
  pushforward maps are constructed, and trust the readers to verify
  the universal properties and functoriality. Given a function
  $f : X \to Y$:
  \begin{itemize}
  \item In $\Kr$, $f^*$ lifts a relation $R$ on $Y$ to the largest
    relation in $X$ such that $f$ is monotone, i.e. for any
    $x,x'\in X$, $(x,x')\in f^*R$ iff $(fx,f'x) \in R$. The pullback
    maps in $\Pre,\Eqv$ are inherited from $\Kr$.

  \item In $\Top$, the pullback $f^*$ maps a topology $\gamma$ on $Y$
    to the so-called weak topology on $X$, i.e. $U \in f^*\gamma$ iff
    there exists $V\in\gamma$ that $U = f\inv(V)$.

  \item In $\Nb$, the description of $f^*$ is similar to that in
    $\Top$. For a neighbourhood relation $F$ on $Y$, the lift $f^*F$
    satisfies that $(x,U) \in f^*F$ iff there exists $V\subseteq Y$
    that $U = f\inv(V)$ and $(fx,V) \in F$.

  \item In $\CABAO$, it is easier to describe the pushforward
    maps. Given any endo-function $m$ on $\wp(X)$, its pushforward is
    the operator $\forall_f \circ m \circ f\inv$ on $Y$, where
    $\forall_f$ is the right adjoint of $f\inv$.

  \item In $\Evl$, evidently the pullback $f^*$ is obtained by
    post-composing with $f\inv$. \qedhere
  \end{itemize}
\end{proof}

At this point, we have accomplished our first goal to recognise all
the instances of semantics in Example~\ref{exam:examcat} as
topological categories. We end this section by describing the
\emph{product} construction:

\begin{definition}[Product of Topological Categories]
  \label{def:producttopocat}
  For any family $\set{\mc A_i}_{i\in I}$ of topological categories
  viewed as functors $\set{\mc A_i : \Set\op \to \InfL}_{i\in I}$,
  their product $\prod_{i\in I}\mc A_i$ is given as the following
  composition,
  \[
    \begin{tikzcd}
      \Set\op \ar[r, "\prod_{i\in I}\mc A_i"] & \prod_{i\in I}\InfL
      \ar[r, "\bigoplus_{i\in I}"] & \InfL.
    \end{tikzcd}
  \]
\end{definition}

The functor $\bigoplus_{i\in I}$ is the biproduct functor on $\InfL$,
which takes a family of inflattices to its set-theoretic product with
entry-wise order. In other words, the fibre
$(\prod_{i\in I}\mc A_i)_X$ of a product is simply the product of the
fibres $\prod_{i\in I}(\mc A_i)_X$. It is easy to verify that
$\prod_{i\in I}\mc A_i$ is indeed their categorical product in the
category of concrete categories and concrete functors. The product
construction for instance allows us to combine a Kripke model with an
evaluation function by looking at $\Kr \times \Evl$, or to consider a
family of models by introducing $\mc A\mta$ for any set $\Sigma$,
which is the $\Sigma$-indexed product of $\mc A$ with itself.

\section{Interpreting Modalities via Geometric Data}
\label{sec:modality}

In this section, we will see how the categorical structure we have
described in Section~\ref{sec:preliminary} would unify the
\emph{interpretation of modalities} in each different types of
semantics. We start by briefly recalling the very basics of the modal
language and its interpretation; standard references include
\cite{blackburnrijkevenema2001,vanBenthem2010open}. Let a non-empty
set $\Sigma$ serve as the signature, and let $\ms P$ be a non-empty
set of propositional variables. The modal language $\mc L_\Sigma$ over
the signature $\Sigma$ and the variable set $\ms P$ is the smallest
set of formulas containing $\ms P$ and closed under forming
conjunctions, negations, and adding modalities $\Box_a$ for all
$a\in\Sigma$. When $\Sigma$ is a singleton, we will omit the
subscript, and $\mc L$ denotes the usual modal language with a single
modality. We will refer to it as the \emph{basic modal
  language}. Other logical connectives are viewed as defined notions.

In any set-based semantics of modal logic, the classical propositional
connectives are always interpreted by the Boolean operations on the
power set algebra. From an algebraic point of view, the interpretation
of the additional modality, in its most general form, should be given
by an arbitrary \emph{endo-function on the power set}, which is
exactly the structure of a CABAO. Hence, we define the structure of a
\emph{semantic functor} to provide the interpretation of basic modal
language:

\begin{definition}[Semantic Functor and Modal Category]
  \label{def:semanticfunctor}
  Let $(\mc A,\geo-)$ be a topological category. A \emph{semantic
    functor} on $\mc A$ is a concrete functor
  $(-)\upa : \mc A \to \CABAO$. A \emph{modal category} is then a
  topological category together with a semantic functor.
\end{definition}

For any modal category $\mc A$ with semantic functor $(-)\upa$, we
recursively define the interpretation of modal formulas as follows:
For any set $X$ and any pair $(A,V)$ in $(\mc A \times \Evl)_X$,
\[ \assv p_A = V(p), \quad \assv{\varphi\wedge\psi}_{A} =
  \assv{\varphi}_{A} \cap \assv{\psi}_A, \quad \assv{\neg\varphi}_{A}
  = X\backslash\assv{\varphi}_{A}, \quad \assv{\Box\varphi}_{A} =
  A\upa(\assv{\varphi}_{A}). \] We may also define the more familiar
\emph{local} version of semantics, and write $A,V,x \models \varphi$
whenever $x \in \assv{\varphi}_A$. Evidently, the identity functor on
$\CABAO$ establishes itself as a modal category. We see below that all
other categories of semantics mentioned previously have modal category
structures:

\begin{proposition}
  \label{prop:semanticfuncfullsubcat}
  There exist \emph{fully faithful} modal functors on
  $\Kr,\Pre,\Eqv,\Top$ and $\Nb$ that embeds them into $\CABAO$,
  inducing the usual semantics of modal logic.
\end{proposition}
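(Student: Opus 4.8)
The plan is to exhibit, for each of these five categories, an explicit semantic functor into $\CABAO$ and then argue fully faithfulness. Since a semantic functor is by definition a \emph{concrete} functor $(-)\upa : \mc A \to \CABAO$, it must preserve the underlying set, so the only data to specify is, for each object of $\mc A$ over a set $X$, which endo-function $m$ on $\wp(X)$ it assigns. The guiding principle is that this endo-function should be the operator interpreting $\Box$ in the usual semantics of that category: in $\Kr$ (and its subcategories $\Pre,\Eqv$) the relation $R$ on $X$ yields the operator $m_R(U) = \set{x \in X \mid \forall y\, (xRy \Rightarrow y \in U)}$; in $\Top$ a topology yields its interior operator; and in $\Nb$ a neighbourhood relation $E$ yields $m_E(U) = \set{x \mid (x,U) \in E}$. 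I would define $(-)\upa$ on objects by these assignments, keeping the underlying function of each morphism unchanged.

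First I would check that each assignment is well-defined on morphisms, i.e. that an $\mc A$-morphism $f$ really does satisfy the $\CABAO$ continuity condition $f\inv \circ n \subseteq m \circ f\inv$. This is precisely the content of the various continuity conditions built into the definitions in Example~\ref{exam:examcat}: for $\Kr$ the $\box$-operator inequality is the standard back-and-forth reformulation of monotonicity of $f$, for $\Top$ it is the familiar fact that preimages of opens are open, and for $\Nb$ it is literally the stated condition $fxFV \nt xEf\inv V$ rewritten in operator form. Functoriality is immediate because the underlying functions compose as in $\Set$ and the operators are determined objectwise. Concreteness holds by construction, so each $(-)\upa$ is a genuine semantic functor, and the displayed recursive clauses in Definition~\ref{def:semanticfunctor} then reproduce exactly the textbook Kripke, topological, and neighbourhood semantics, establishing the claim that they induce ``the usual semantics.''

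The substantive part is fully faithfulness. Faithfulness is automatic: both source and target are concrete over $\Set$ and $(-)\upa$ fixes underlying functions, so it is injective on each Hom-set. For fullness I must show that every $\CABAO$-morphism $f : X\upa \to Y\upa$ between the \emph{images} of two objects is already an $\mc A$-morphism. Concretely, given a function $f$ whose induced operators satisfy $f\inv \circ n \subseteq m \circ f\inv$, I need to recover the original structural condition in $\mc A$. The plan is to run the morphism-well-definedness argument of the previous paragraph in reverse: because the operators $m,n$ are constructed from the relation, topology, or neighbourhood data in a way that faithfully encodes that data (e.g. a relation $R$ is recoverable from $m_R$, since $xRy$ iff $y$ lies in every $U$ with $x \in m_R(U)$, and a topology is recoverable as the fixed points of its interior operator), the $\CABAO$ inequality forces back the $\mc A$-condition. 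This reversibility is what makes the embeddings full rather than merely faithful.

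The main obstacle I anticipate is precisely this recoverability step, and it is where the cases genuinely differ. For $\Top$ one must use that the interior operator is idempotent and determines its topology, so not every $\CABAO$-object lies in the image --- fullness is a statement only about morphisms between images, and I must be careful that the continuity inequality for interior operators is equivalent to topological continuity rather than merely implying it. For $\Nb$, where the operator $m_E$ need satisfy no closure conditions at all, the neighbourhood relation $E$ is recovered directly from $m_E$ via $(x,U) \in E \Leftrightarrow x \in m_E(U)$, so the embedding is transparent; the delicate point is instead checking that the $\Nb$-continuity condition and the $\CABAO$-continuity condition are literally the same inequality. For $\Eqv$ and $\Pre$, since they are full subcategories of $\Kr$, fullness into $\CABAO$ follows once it is established for $\Kr$ together with the observation that the image operators detect whether a relation is a preorder or an equivalence, but strictly the cleanest route is to note that fullness is inherited along full subcategory inclusions. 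I would therefore organise the proof by first settling $\Kr$ in full detail, then deducing $\Pre$ and $\Eqv$, and finally treating $\Top$ and $\Nb$ separately, flagging the interior-operator idempotency as the one genuinely non-formal ingredient.
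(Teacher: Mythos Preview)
Your proposal is correct and matches the paper's approach exactly: the paper's proof is only a sketch that specifies precisely the same three assignments (the operator $\forall_{R\dg}$ for $\Kr$ with $\Pre,\Eqv$ inherited, the interior operator for $\Top$, and $n_E(S)=\scomp{x}{(x,S)\in E}$ for $\Nb$) and then explicitly ``trust[s] the readers to verify their fully faithfulness.'' Your outline of that verification---faithfulness from concreteness, fullness by recovering the structural condition from the $\CABAO$ inequality, with $\Pre,\Eqv$ handled via the full inclusion into $\Kr$---is exactly the work the paper omits, and it is sound.
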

\begin{proof}[Proof Sketch.]
  Again, we only describe the construction of semantic functors in
  each case, and trust the readers to verify their fully faithfulness:
  \begin{itemize}
  \item Recall for any relation $R \subseteq X \times Y$, there exists
    an induced operator $\forall_R : \wp(X) \to \wp(Y)$, such that for
    any $S\subseteq X$,
    $\forall_R(S) = \scomp{y\in Y}{\forall x[xRy \nt x\in S]}$. We
    then construct the embedding $\Kr \hook \CABAO$ by sending each
    relation $R$ in fibre $\Kr_X$ to the operator $\forall_{R\dg}$,
    where $R\dg$ is the dual relation of $R$. The semantic functors on
    $\Pre$ and $\Eqv$ are inherited from the one on $\Kr$.

  \item For $\Top$, it sends each topology $\tau$ on a set $X$ to the
    interior operator $j_\tau$ it induces.

  \item For $\Nb$, it assigns $E$ in $\Nb_X$ to $n_E$, such that
    $n_E(S) = \scomp{x}{(x,S)\in E}$ for any $S\subseteq X$. \qedhere
  \end{itemize}
\end{proof}

Proposition~\ref{prop:semanticfuncfullsubcat} then completes our
categorical unification of all the mentioned types of semantics on how
they interpret the basic modal language. Clearly, our approach of
given in Definition~\ref{def:semanticfunctor} closely relates to the
spirit of \emph{algebraic semantics} of modal logic.  But one
additional insight our categorical framework suggests is an even
closer connection between these different types of semantics with
modal algebras via Proposition~\ref{prop:semanticfuncfullsubcat}, in
that the single notion of continuous morphisms between CABAOs as
defined in Example~\ref{exam:examcat} explains all the different types
of morphisms in these topological categories, by identifying them as
\emph{full subcategories} of $\CABAO$.

Intuitively, it is precisely the semantic functor that provides the
interpretation of modalities in all cases, but we can establish the
correspondence in a more formal way, by considering
\emph{transformation of models} as mentioned in
Section~\ref{sec:intro}. We define when a concrete functor between two
modal categories interacts well with a specific fragment of modal
logic:

\begin{definition}[Preservation of Language]
  \label{def:preservelanguage}
  Let $\mc A,\mc B$ be two modal categories, which both support the
  interpretation of certain fragment of modal language $\mc L_0$ which
  extends $\mc L$. We say a concrete functor $F : \mc A \to \mc B$
  \emph{preserves the interpretation of the language $\mc L_0$}, if
  the following happens: For $(A,V)$ in $(\mc A \times \Evl)_X$ over
  some set $X$ and for any formula $\varphi \in \mc L_0$, we have
  $\assv{\varphi}_{A} = \assv{\varphi}_{FA}$.
\end{definition}

In other words, a concrete functor $F$ preserves the interpretation of
a language $\mc L_0$ iff the evaluation of each formula in $\mc L_0$
remains unchanged when we apply the transformation $F$. As a first
example of establishing an exact correspondence between a semantic
structure and a particular syntactic pattern, we prove the following
theorem:

\begin{theorem}
  \label{thm:modalfuncpreserveL}
  For any concrete functor $F : \mc A \to \mc B$ between two modal
  categories $(\mc A,(-)\upa_{\mc A})$ and $(\mc B,(-)\upa_{\mc B})$,
  it commutes with the two semantic functors iff it preserves the
  interpretation of $\mc L$.
\end{theorem}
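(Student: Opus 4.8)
The plan is to observe first that, since every functor involved is concrete and $\geo-$ is faithful, both $(-)\upa_{\mc A}$ and $(-)\upa_{\mc B}\circ F$ are concrete functors $\mc A \to \CABAO$, and two such functors coincide exactly when they agree on objects: on morphisms they each act as the identity on underlying functions, so faithfulness of $\geo-$ in $\CABAO$ forces them to agree there as soon as they agree on objects. Because $F$ preserves underlying sets, each $FA$ lies over the same set $X$ as $A$, so the statement that $F$ commutes with the two semantic functors is equivalent to the pointwise assertion that $A\upa_{\mc A} = (FA)\upa_{\mc B}$ as endo-functions on $\wp(X)$, for every object $A$ of $\mc A$. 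I will prove both directions in this reformulated shape.

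For the forward implication, suppose $A\upa_{\mc A} = (FA)\upa_{\mc B}$ for all $A$, and fix $(A,V)\in(\mc A\times\Evl)_X$; note that $(FA,V)$ then lies in $(\mc B\times\Evl)_X$ for the \emph{same} evaluation $V$. I would argue $\assv{\varphi}_A = \assv{\varphi}_{FA}$ by induction on $\varphi\in\mc L$. The base case $\varphi = p$ is immediate, since both sides equal $V(p)$, which is independent of the modal structure. The conjunction and negation cases follow from the inductive hypothesis together with the recursive clauses, as those only invoke the Boolean operations on $\wp(X)$. The only case that uses the commutation hypothesis is $\varphi = \Box\psi$, where $\assv{\Box\psi}_A = A\upa_{\mc A}(\assv{\psi}_A) = (FA)\upa_{\mc B}(\assv{\psi}_{FA}) = \assv{\Box\psi}_{FA}$, the middle equality combining the hypothesis with the inductive step $\assv{\psi}_A = \assv{\psi}_{FA}$.

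The converse carries the real content. Assuming $F$ preserves the interpretation of $\mc L$, I must recover equality of operators from equality of interpretations. The key observation is that $A\upa_{\mc A}$ is wholly determined by its action on arbitrary subsets, and any subset can be realised as the interpretation of a single propositional variable. Concretely, fix an object $A$ over $X$ and an arbitrary $S\subseteq X$; since $\ms P$ is non-empty, choose $p\in\ms P$ and an evaluation $V$ with $V(p) = S$. Then $\assv{p}_A = \assv{p}_{FA} = S$, and applying the preservation hypothesis to the formula $\Box p\in\mc L$ gives
\[ A\upa_{\mc A}(S) = \assv{\Box p}_A = \assv{\Box p}_{FA} = (FA)\upa_{\mc B}(S). \]
As $S$ and $A$ were arbitrary, this yields $A\upa_{\mc A} = (FA)\upa_{\mc B}$ for every object, which is exactly the reformulated commutation condition.

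The main subtlety, rather than a genuine obstacle, lies in this backward direction, and it rests on two points that must be verified with care: that preservation of $\mc L$ is phrased with a shared evaluation $V$ across $A$ and $FA$, so that $\assv{p}_A = \assv{p}_{FA}$ holds automatically; and that $\ms P$ is non-empty, which is precisely what lets every $S\subseteq X$ appear as $\assv{p}$ for a suitable $V$. Once both are in place, the argument reduces to a single application of the $\Box p$ clause, and no feature of the topological categories beyond their semantic functors enters.
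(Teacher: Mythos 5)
Your proof is correct and follows essentially the same route as the paper's: the forward direction is the identical induction on formulas with the modality clause as the only nontrivial case, and the backward direction uses the same test formula $\Box p$ with $V(p)=S$ to compare the two operators on an arbitrary subset $S$ (you phrase this directly where the paper argues by contraposition). Your preliminary remark that agreement on objects already forces the two concrete functors into $\CABAO$ to agree on morphisms, by faithfulness of the forgetful functor, is a correct detail the paper leaves implicit.
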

\begin{proof}
  Suppose $F$ does not commute with the two semantic functors, then
  for some object $A$ in $\mc A$ over some set $X$, $(A)\upa_\mc A$
  and $(FA)\upa_{\mc B}$ would not agree. This means that the two
  operators on $\wp(X)$ do not coincide, which implies they must not
  coincide on some subset $S \subseteq X$. Consider the simple formula
  $\Box p$, and an evaluation function $V$ that assigns $p$ to $S$. By
  definition, $\assv{\Box p}_A$ and $\assv{\Box p}_{FA}$ will
  \emph{not} be the same.

  The proof of the only if direction is obviously by induction on the
  structure of formulas, and the only interesting case is the one
  involving modalities. Since $F$ is assumed to be a modal functor, we
  must have $(A)\upa_{\mc A} = (FA)\upa_{\mc B}$ for any $A$ in
  $\mc A$, which means that the interpretation of the modalities by
  $A$ through $(-)\upa_{\mc A}$ and by $FA$ through $(-)\upa_{\mc B}$
  are identical, which suffices for the inductive proof.
\end{proof}

Theorem~\ref{thm:modalfuncpreserveL} provides the precise formal
content of what we mean informally by the correspondence between the
syntactic structure of modalities and the semantic structure of
semantic functors of a modal category. And henceforth, we will refer
to those concrete functors between two modal categories which commutes
with the semantic functors on both sides as \emph{modal
functors}. There are already many interesting examples of modal
functors we can explore, and below we only list a few:

\begin{example}
  \label{exam:modalfunc}
  Here we list some interesting examples of model transformations
  between the modal categories we have introduced so far:
  \begin{itemize}
  \item By definition, any modal category has a unique modal functor
    mapping into $\CABAO$, which makes it the terminal object in the
    category of modal categories and modal functors.

  \item Since the semantic functors in $\Pre$ and $\Eqv$ are induced
    by the one in $\Kr$, the embeddings $\Eqv\hook\Pre$ and
    $\Pre\hook\Kr$ are both modal functors.

  \item There is a modal embedding $\Pre \hook \Top$, assigning a
    preorder its Alexandroff topology.

  \item In fact, we can show that $\Nb$ is isomorphic to $\CABAO$,
    which means that all the above examples has a modal embedding into
    $\Nb$ as well.
  \end{itemize}
  It is also instructive to look at counter-examples of modal
  functors. It turns out, the above modal embeddings all have either a
  left or a right adjoint, and these adjoints are usually \emph{not}
  modal embeddings with respect to the semantic functors we have
  constructed in Proposition~\ref{prop:semanticfuncfullsubcat}:
  \begin{itemize}
  \item We have both a left and a right adjoint
    $\Pre\rightrightarrows\Eqv$ for the modal embedding
    $\Eqv \hook \Pre$, sending a preorder to the smallest equivalence
    relation containing it and the least one it contains. These
    adjoints do not commute with the semantic functors since they
    change the relation. Similarly, there is a left adjoint
    $\Kr \to \Pre$ sending a relation to its preorder closure, which
    isn't modal either.

  \item The embedding $\Pre\hook\Top$ has a right adjoint
    $\Top \to \Pre$, sending a topological space to its specialisation
    order, but this construction does not preserve the information of
    all open neighbourhoods of a point, hence it is also not
    modal. \qedhere
  \end{itemize}
\end{example}

However, the mere syntactic structure of a modality, arguably, has not
too much to do with the rich structure of topological categories we
have seen in Section~\ref{sec:preliminary}. In fact, the notion of
semantic functors and modal categories in
Definition~\ref{def:semanticfunctor} can indeed be stated more
generally for concrete categories, not only for topological ones. The
true usage of the full structure of topological categories emerges
when we consider further syntactic extensions of modal logic, which
are the topics of the next two sections.

\section{Modal Strength, Group Knowledge and Fibre Structure}
\label{sec:group}
In this section, we will proceed to study the extension of multi-agent
fragment of modal logic, with explicit syntactic comparison of
\emph{modal strength}, or \emph{dependence relation}, between
different modalities, and forming \emph{group agents}. Recent
works~\cite{baltag2021simple,baltag2021lcd} put dependence purely in
modal terms, but they have only considered the relational and
topological contexts. How to form group agents is also an active topic
for current research on modal logic and collective
agency~\cite{goble2006deontic,tamminga2021expressivity}, but almost
all approaches focus on a single type of models. In both cases, our
categorical approach allows a unifying description for all types of
semantics, which is one of the main benefit. Our ultimate goal is
again to identify an exact correspondence between these syntactic
patterns with certain semantic structures of topological categories,
with formal content similar to that of
Theorem~\ref{thm:modalfuncpreserveL}.

Let's first look at the simple extension of a multi-modal language,
i.e. when the indexed set $\Sigma$ is not a singleton. There will be
different modalities $\Box_a,\Box_b,\cdots$ with $a,b\in\Sigma$ in the
language $\mc L_\Sigma$. It should be straight forward to recognise
that the multi-agent fragment $\mc L_\Sigma$ are related to taking the
\emph{products} of topological categories. Given any modal category
$\mc A$, recall that we use $\mc A\mta$ to denote the $\Sigma$-indexed
self-product of $\mc A$. Any semantic functor $(-)\upa$ on $\mc A$
naturally extends to one from $\mc A\mta$ to $\CABAO\mta$, which by an
abuse of notation we also denote as $(-)\upa$: Given any object
$(A_a)_{a\in\Sigma}$ in the fibre $\mc A\mta_X$, which by our
construction in Definition~\ref{def:producttopocat} is simply a
$\Sigma$-indexed tuple of objects in the fibre $\mc A_X$, we have
$(A_a)_{a\in\Sigma}\upa = (A_a\upa)_{a\in\Sigma}$. The
$\Sigma$-indexed tuple $(A_a\upa)_{a\in\Sigma}$ is then expected to
provide the interpretation of each modality $\Box_a$ in the language
$\mc L_\Sigma$ for any $a\in\Sigma$, using the corresponding object
$A_a\upa$. Intuitively, different modalities correspond to different
objects in the same fibre of a topological category. Hence, given any
$((A_a)_{a\in\Sigma},V)$ in the fibre $(\mc A\mta\times\Evl)_X$, we
may change the clause of modalities in the recursive definition of
evaluation of formulas to
$\assv{\Box_a\varphi}_{(A_a)_{a\in\Sigma}} =
(A_a)\upa(\assv\varphi_{(A_a)_{a\in\Sigma}})$, to interpret
$\mc L_\Sigma$.

However, in the language $\mc L_\Sigma$, we treat different modalities
as different individuals, and do not consider the possible relations
between different modalities. But we do have a meaningful way
comparing them, since semantically they denote different objects
within the same fibre of a topological category $\mc A$, and there is
a canonical order in each fibre $\mc A_X$. It turns out, this partial
order within each fibre signifies the \emph{modal strength} of
different modalities. Explicitly, suppose we have two objects $A,B$ in
the fibre $\mc A_X$ that $A \le B$. The semantic functor then gives us
two operators $m_A \le m_B$ in $\CABAO_X$, which, according to our
definition of morphisms in $\CABAO$, actually means
$m_B \subseteq m_A$.

In different contexts, the modal strength relation has various
incarnations. For instance, in epistemic or doxastic logic, we read
the modal formula $\Box_a\varphi$ as agent-$a$ knows or believes
$\varphi$ (cf.~\cite{blackburn2006handbook}). Now if we have
$A_a \le A_b$ in the fibre $\mc A_X$, the above induced two modalities
satisfying $m_b \subseteq m_a$ would actually suggest that there is an
\emph{epistemic dependence} between the two agents' knowledge or
belief: Whenever $b$ knows some proposition at state $x \in X$,
viz. $x \in m_b(\ass\varphi)$, $a$ also knows it at that state,
because $x \in m_b(\ass\varphi) \subseteq m_a(\ass\varphi)$. In other
applications, such modal strength comparison would mean something
else.

This observation motivates us to add such comparison of modalities
explicitly into our syntax, in the form of \emph{dependence
  atoms}. For any $a,b\in\Sigma$, we could add an atomic proposition
$K_ab$ into our language, with the intuitive reading of $K_ab$ as
stating the modality denoted by $a$ lies below the one denoted by
$b$. We refer to this extended language as $\mc L^D_\Sigma$. But to
interpret such dependence atoms as predicates, we need the following
\emph{local} version of strength orders between two operators on the
same power set algebra:

\begin{definition}
  \label{def:localdependence}
  For any two operators $m,n$ in $\CABAO_X$ and any $U \subseteq X$,
  we say $m$ \emph{locally depends on $n$ in $U$}, denoted as
  $m \subseteq_U n$, if for any $S \subseteq X$ and any $x\in U$,
  $x \in m(S) \nt x \in n(S)$.
\end{definition}
\noindent
In this way, the global relation $m \subseteq n$ is the same as
$m \subseteq_X n$. When $U$ is a singleton $\set{x}$, we simply write
$m \subseteq_x n$. The following observation is crucial for us to
define the interpretation of the dependence atoms:

\begin{lemma}
  For any $m,n$ in $\CABAO_X$, there is a maximal subset $U$ that
  $m \subseteq_U n$.
\end{lemma}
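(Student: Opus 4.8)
The plan is to exhibit the maximal set explicitly as the set of points at which the local dependence holds. Define
\[ U = \scomp{x \in X}{m \subseteq_x n}, \]
that is, the set of those $x \in X$ such that for every $S \subseteq X$ we have $x \in m(S) \nt x \in n(S)$. I claim this $U$ is the desired maximal subset.

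First I would observe that the relation $m \subseteq_V n$ is determined \emph{pointwise}: unfolding Definition~\ref{def:localdependence}, $m \subseteq_V n$ holds precisely when $m \subseteq_x n$ holds for every $x \in V$. This is immediate, since the defining condition quantifies over $x \in V$ and over $S \subseteq X$ separately, with no interaction between the behaviour at distinct points $x$.

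Granting this observation, $m \subseteq_U n$ holds by the very construction of $U$, as every element of $U$ satisfies $m \subseteq_x n$. For maximality, suppose $V \subseteq X$ is any subset with $m \subseteq_V n$. By the pointwise characterisation, every $x \in V$ satisfies $m \subseteq_x n$, hence $x \in U$; thus $V \subseteq U$. Therefore $U$ contains every subset on which $m$ locally depends on $n$, so it is not merely maximal but in fact the largest such subset.

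There is no genuine obstacle in this argument; the only point to verify is the pointwise nature of the relation, which is equivalent to noting that the collection of subsets $V$ satisfying $m \subseteq_V n$ is closed under arbitrary unions (and contains $\emptyset$). Consequently the union of all such subsets is again a member of the collection, and this union coincides with the set $U$ defined above.
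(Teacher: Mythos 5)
Your proof is correct and follows essentially the same route as the paper's: both define the maximal set as $\scomp{x}{m \subseteq_x n}$ and justify it via the pointwise nature of the relation, which the paper phrases as closure of local dependence under arbitrary unions (plus the vacuous case of $\emptyset$). Your version just makes the pointwise characterisation slightly more explicit; there is no substantive difference.
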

\begin{proof}
  By definition, for the empty set $\emptyset$ we always have
  $m \subseteq_\emptyset n$, since the universal quantification
  $\forall x\in\emptyset$ is vacuous. Furthermore, local dependence is
  closed under taking unions, since it is trivial to note that
  $m \subseteq_{\bigcup_{i\in I} U_i}$ iff for any $i\in I$,
  $m \subseteq_{U_i} n$. Thus, the maximal subset $U$ is given by
  $\scomp{x}{m \subseteq_x n}$.
\end{proof}

Given an object $(A_a)_{a\in\Sigma}$ in the fibre $\mc A\mta_X$, the
interpretation $\ass{K_ab}$ of the newly added dependence atoms should
now be defined as the maximal subset $U$ of $X$, such that
$A_b\upa \subseteq_U A_a\upa$ holds. This is exactly how the
dependence atoms are interpreted in any topological categories. We
might also give the local version of the truth condition,
$(A_a)_{a\in\Sigma},x \models K_ab$ iff $A_b\upa \subseteq_x
A_a\upa$. Notice that the interpretation of $K_ab$ is
\emph{independent} from the choice of the evaluation function $V$ on
$X$. We may look at the concrete meaning of such dependences in all
the remaining examples we have considered so far:

\begin{example}
  We list here how local dependence looks like in each exemplar modal
  category:
  \begin{itemize}
  \item In $\Kr$, $\Pre$ and $\Eqv$, given relations
    $(R_a)_{a\in\Sigma}$ on $X$, we have
    $(R_a)_{a\in\Sigma},x \models K_ab$ iff $R_a[x] \subseteq
    R_b[x]$. In the epistemological interpretation, this means
    agent-$a$'s uncertainty locally at $x$ is less than $b$'s.

  \item In $\Top$, given topologies $(\tau_a)_{a\in\Sigma}$ on $X$,
    $(\tau_a)_{a\in\Sigma},x \models K_ab$ iff
    $1_X : (X,\tau_a) \to (X,\tau_b)$ is \emph{locally continuous} at
    $x$. This relates to the continuity view of epistemic dependence
    discussed in~\cite{baltag2021lcd}.

  \item In $\Nb$, given neighbourhoods $(E_a)_{a\in\Sigma}$ on $X$,
    $(E_a)_{a\in\Sigma},x \models K_ab$ iff $E_b[x] \subseteq
    E_a[x]$. In evidence based logic, this interprets as the evidence
    set of $b$'s is contained in that of $a$'s locally at $x$
    (cf.~\cite{vanbenthem2012evidence}). \qedhere
  \end{itemize}
\end{example}

In this case, preserving the interpretation of the multi-agent
modalities and the dependence atoms does not require anything else
than being a modal functor:

\begin{theorem}
  \label{thm:conservextenDSigma}
  For any concrete functor $F : \mc A \to \mc B$ between two modal
  categories, it preserves the interpretation of $\mc L^D_\Sigma$ iff
  it preserves the interpretation of $\mc L$.
\end{theorem}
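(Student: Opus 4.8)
The plan is to show that each of the two conditions in the statement is equivalent to $F$ being a \emph{modal functor}, i.e. to $F$ commuting with the semantic functors on $\mc A$ and $\mc B$; the equivalence between being a modal functor and preserving $\mc L$ is then exactly Theorem~\ref{thm:modalfuncpreserveL}. The first preliminary observation I would record is that a concrete functor $F : \mc A \to \mc B$ extends componentwise to a product functor $F\mta : \mc A\mta \to \mc B\mta$, under which interpretation of $\mc L^D_\Sigma$ actually takes place. Since both the extended semantic functor and $F\mta$ act coordinatewise --- recall $(A_a)_{a\in\Sigma}\upa = (A_a\upa)_{a\in\Sigma}$ --- the functor $F$ commutes with the semantic functors on the base categories if and only if $F\mta$ does on the products.

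For the implication from preservation of $\mc L^D_\Sigma$ to preservation of $\mc L$, I would use that $\mc L$ sits inside $\mc L^D_\Sigma$ as a fragment: since $\Sigma$ is non-empty, fix any $a\in\Sigma$ and translate $\Box$ to $\Box_a$. For an object $A$ in $\mc A_X$, interpreting such a translated formula by any tuple whose $a$-th coordinate is $A$ reproduces the $\mc L$-interpretation by $A$, because the only modality involved reads off the operator $A\upa$. Hence preservation of $\mc L^D_\Sigma$ forces preservation of $\mc L$, and by Theorem~\ref{thm:modalfuncpreserveL} this makes $F$ a modal functor.

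For the converse, assuming $F$ is a modal functor I would prove that $F\mta$ preserves the interpretation of all of $\mc L^D_\Sigma$ by induction on formulas. The Boolean cases are verbatim those of Theorem~\ref{thm:modalfuncpreserveL}, and the case $\Box_a\varphi$ follows from the induction hypothesis together with the operator identity $A_a\upa = (FA_a)\upa$ on $\wp(X)$ supplied by the first paragraph. The one genuinely new clause is the dependence atom $K_ab$, interpreted as the maximal $U$ with $A_b\upa \subseteq_U A_a\upa$.

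The crux, and the only point needing care, is that the local dependence relation $\subseteq_U$ of Definition~\ref{def:localdependence} is a predicate on the bare operators $A_a\upa, A_b\upa$ alone, entirely independent of the evaluation function. Because $F$ commutes with the semantic functors, $A_a\upa$ and $(FA_a)\upa$ are the \emph{same} endo-function on $\wp(X)$, and likewise for $b$; therefore $A_b\upa \subseteq_U A_a\upa$ and $(FA_b)\upa \subseteq_U (FA_a)\upa$ hold for exactly the same subsets $U$, so their maxima --- the interpretations of $K_ab$ before and after applying $F\mta$ --- coincide. I do not anticipate a real obstacle here: once one notices that both modalities and dependence atoms are read off purely from the operators produced by the semantic functor, preservation of those operators (which is precisely what a modal functor guarantees) propagates automatically to the whole of $\mc L^D_\Sigma$.
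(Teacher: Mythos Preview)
Your proposal is correct and follows essentially the same approach as the paper: both directions reduce to $F$ being a modal functor via Theorem~\ref{thm:modalfuncpreserveL}, and the preservation of both $\Box_a$ and $K_ab$ then follows because their interpretations depend only on the operators $A_a\upa$, which a modal functor leaves unchanged. Your treatment is in fact more careful than the paper's in two places --- you make explicit the passage from $F$ to $F\mta$ and you spell out the translation $\Box\mapsto\Box_a$ needed to embed $\mc L$ into $\mc L^D_\Sigma$, where the paper simply asserts that $\mc L^D_\Sigma$ ``is an extension of $\mc L$'' --- but the underlying argument is the same.
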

\begin{proof}
  The only if part is trivial, since $\mc L^D_\Sigma$ is an extension
  of $\mc L$. For the if part, by Theorem~\ref{thm:modalfuncpreserveL}
  we know $F$ must be a modal functor. This implies that for any
  $a\in\Sigma$ and any tuple $(A_a)_{a\in\Sigma}$ in $\mc A\mta,$ we
  must have $(A_a)\upa_{\mc A} = (FA_a)\upa_{\mc B}$, which means that
  $A_a$ induces the same operator as $FA_a$. This suffices for the
  preservation of the fragment $\mc L_\Sigma$ by $F$. $F$ preserving
  dependence atoms is also immediate, since their interpretation only
  relies on the operators on the underlying set.
\end{proof}

However, this changes once we start to combine sets of agents into a
single agent and consider such group structures explicitly in our
syntax. From a philosophical perspective, when modelling the inference
and reasoning patterns of agents under certain information structure
using modal logic, we not only care about individual agents
themselves, but we would also like to study how \emph{a group of
  agents} as a whole reasons and interacts with each other. As
mentioned, this is an active topic on how to represent group agency in
different contexts. Most of the traditional developments of group
agency in modal logic are based on Kripkean
semantics~\cite{vanBenthem2010open,vanbenthem2011ldii}, but there has
been recent efforts exploring how to define common knowledge of a
group in topological semantics~\cite{topoapproach2019}. Again, our
categorical approach would uniformly describe the group structure in
any topological category associated with every type of semantics.

To combine a group of agents to a single one, it requires us to
transform an object in $\mc A^G$ for any subset $G \subseteq \Sigma$,
which is a tuple representing each individual agent in the group $G$,
to a single object in $\mc A$, which corresponds to the collective
group agent. Naturally, there are two canonical ways to do this in
general for any set $G$, using the fact that each fibre in a
topological category is not only a poset, but indeed a \emph{complete
  lattice}. In particular, we can form two (families of) concrete
functors $\bigwedge,\bigvee : \mc A^G \to \mc A$. As the symbols
suggest, for any tuple $(A_a)_{a\in G}$ in $\mc A^G$, they act on it
as follows: $\bigwedge (A_a)_{a\in G} = \bigwedge_{a\in G} A_a$, and
$\bigvee (A_a)_{a\in G} = \bigvee_{a\in G} A_a$. Functoriality of
$\bigwedge,\bigvee$ should be immediate.

These functors then allow us to combine a group of agents of arbitrary
size into a single one. We will denote them as the $\bigwedge$- and
$\bigvee$-combination of group agents, and they correspond to two
different readings of what a group of agents means. Intuitively, the
$\bigwedge$-combination means the group \emph{shares the information}
of each individual, as if they are \emph{physically together}. Because
once we form a group $\bigwedge_{a\in G}A_a$, for any individual $a$
in the group $G$ we would have $\bigwedge_{a\in G}A_a \le A_a$ in the
fibre, which implies $A_a\upa \subseteq (\bigwedge_{a\in
  G}A_a)\upa$. Just as we have discussed before, if we adopt an
epistemic interpretation of modalities, this means that whatever agent
$a$ knows, so does the group, and this holds for any agent in this
group. Furthermore, the meet taken in the fibre $\mc A_X$ actually
shows that the group modelled by $\bigwedge_{a\in\Sigma}A_a$ is the
universal one that has this property. This informally suggests that
the group acts like an agent who has access to all the information
owned by each individual agent in this group, exactly like the case
when everyone in the group has come to a single location, and put all
of their information on the table where anyone can see. In $\Pre$ or
$\Eqv$, the $\bigwedge$-combination simply take the conjunction of all
the relations, and this is exact the well-known \emph{distributive
  knowledge} of a group (cf.~\cite{vanbenthem2011ldii}). Hence, the
$\bigwedge$-combination generalises distributive knowledge to all
types of semantics.

On the other hand, the $\bigvee$-combination means the group
\emph{shares the uncertainties} of each individual, as if they are
only \emph{abstractly} considered as a single agent. Dual to the case
before, we must have $(\bigvee_{a\in G}A_a)\upa \subseteq A_a$ for any
$a\in G$. This implies that for the combined group, if it knows
something then necessarily each individual in the group also knows
this, and the group agent is the universal one that has this
property. To better compare with the existing literature, we observe
the following simple result:

\begin{lemma}
  \label{lem:commonknowledge}
  If the semantic functor $(-)\upa$ on a topological category $\mc A$
  always induces monotone and idempotent operators, then
  $(\bigvee_{a\in G}A_a)\upa \subseteq A_{a_1}\upa \circ \cdots \circ
  A_{a_n}\upa$ for any $a_1,\cdots,a_n\in G$.
\end{lemma}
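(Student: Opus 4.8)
The plan is to reduce everything to a single operator inequality and then run a short induction. Write $m := (\bigvee_{a\in G}A_a)\upa$ for the operator attached to the join, and $m_i := A_{a_i}\upa$ for each factor on the right; by the standing hypothesis all of these, and in particular $m$, are monotone and idempotent endofunctions of $\wp(X)$. The first ingredient I need is the pointwise inequality $m \subseteq m_i$ for every $i$. This is exactly the dual observation already recorded just before the lemma: each $A_{a_i}$ lies below the join $\bigvee_{a\in G}A_a$ in the fibre $\mc A_X$, and since $(-)\upa$ is a concrete functor it restricts to an order-preserving map of fibres, so $A_{a_i}\upa \le (\bigvee_{a\in G}A_a)\upa$ holds in $\CABAO_X$. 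Because the fibre order on $\CABAO_X$ is the \emph{reverse} of the pointwise subset order on operators (a morphism above $\id_X$ from $m$ to $n$ is precisely an inclusion $n \subseteq m$), this translates into $m \subseteq m_i$ as maps $\wp(X) \to \wp(X)$.

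With this in hand, I would prove the claim $m \subseteq m_1 \circ \cdots \circ m_n$ by induction, peeling off one factor from the left at a time. Precisely, writing $m^k$ for the $k$-fold composite of $m$ with itself, I would show $m^k \subseteq m_{n-k+1}\circ\cdots\circ m_n$ for $1 \le k \le n$. The base case $k = 1$ is just the inequality $m \subseteq m_n$ obtained above. For the step, assume $m^k \subseteq C_k$ with $C_k := m_{n-k+1}\circ\cdots\circ m_n$; applying the monotone operator $m$ to this pointwise inclusion gives $m^{k+1} = m \circ m^k \subseteq m \circ C_k$, and then $m \subseteq m_{n-k}$ upgrades the outer factor to $m \circ C_k \subseteq m_{n-k}\circ C_k = C_{k+1}$, closing the induction. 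Taking $k = n$ yields $m^n \subseteq m_1 \circ \cdots \circ m_n$, and idempotence of $m$ identifies $m^n$ with $m$, giving the stated inclusion.

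The genuine idea here is the use of idempotence to inflate $m$ into the $n$-fold composite $m^n$, so that there are exactly as many copies of $m$ as there are factors $m_i$ to be distributed against; monotonicity is then only needed to push $m$ through an established inclusion. Accordingly, I expect no serious obstacle, and the only thing demanding care is bookkeeping of the order conventions: one must remember that the fibrewise join passes, under the order-reversing semantic functor, to the \emph{smallest} operator in the pointwise order, so that all the inequalities point in the direction $m \subseteq m_i$ rather than the reverse. It is worth noting that the argument never uses monotonicity or idempotence of the individual $m_i$ --- only of $m$ itself --- although the blanket hypothesis of the lemma of course supplies these too.
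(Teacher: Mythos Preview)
Your proof is correct and follows exactly the route the paper sketches in its one-line argument: use $(\bigvee_{a\in G}A_a)\upa \subseteq A_{a_i}\upa$ for each $i$, then combine monotonicity and idempotence of the join operator to absorb the $n$ factors. You have simply spelled out the induction and the order-convention bookkeeping that the paper leaves implicit.
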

\begin{proof}
  If follows by $(\bigvee_{a\in G}A_a)\upa \subseteq A_{a_i}\upa$ for
  any $i$, and monotonicity, idempotence of these operators.
\end{proof}
\noindent
Translating back to natural language, in the condition of
Lemma~\ref{lem:commonknowledge}, what the $\bigvee$-combined group
knows is much more restrictive, in that if the group knows something,
then any agent in the group also knows it, and furthermore $a_i$ knows
that $a_j$ knows that $\cdots$ that $a_k$ knows it. This shows that
the $\bigvee$-combination is a generalisation of the \emph{common
  knowledge} of a group (again, cf.~\cite{vanbenthem2011ldii}).

We may now formally define the syntactic extension where we also allow
group formation in our logic. For any indexed set $\Sigma$, we let
$\Sigma_l$, $\Sigma_r$ be \emph{synonyms} for the power set
$\wp(\Sigma)$. The language $\mc L^D_{\Sigma_l}$ and
$\mc L^D_{\Sigma_r}$ is nothing more but the modal languages with
agent symbols in $\Sigma_l,\Sigma_r$, respectively, together with all
the dependence atoms between these group agents. However, we write in
this way because to interpret the language $\mc L^D_{\Sigma_l}$ or
$\mc L^D_{\Sigma_r}$, we still only need to work within $\mc A\mta$,
not $\mc A^{\Sigma_l}$ or $\mc A^{\Sigma_r}$.

Given an object $(A_a)_{a\in\Sigma}$ in $\mc A\mta$ over the set $X$,
we can interpret the modal operators for a group of agents in the two
fragments as either the $\bigwedge$- or $\bigvee$-combination. For any
subset $G \subseteq \Sigma$, we define the interpretation of $\Box_G$
in $\mc L^D_{\Sigma_l}$ as the operator
$\left(\bigwedge_{a\in G} A_a \right)\upa$; and similarly for the
language $\mc L^D_{\Sigma_r}$, $\Box_G$ is interpreted as the operator
$\left(\bigvee_{a\in G} A_a \right)\upa$. Building on what we have
developed before, this suffices to interpret the two languages
$\mc L^D_{\Sigma_l}$ and $\mc L^D_{\Sigma_r}$. Of course, for a
singleton group $\set a$, its interpretation under the two fragments
coincide, which still corresponds to the usual interpretation of the
operator $A_a\upa$. The upshot is that we can identify the following
valid logical rules in the two fragments $\mc L^D_{\Sigma_l}$ and
$\mc L^D_{\Sigma_r}$:

\begin{proposition}
  For any modal category $\mc A$, the following axioms are valid in
  $\mc L_{\Sigma_l}^D$ (resp. $\mc L^D_{\Sigma_r}$):\footnote{Half of
    these axioms corresponding to the fragment $\mc L^D_{\Sigma_l}$
    has already been identified
    in~\cite{baltag2021simple,baltag2021lcd} in the special case of
    $\Top$.}
  \begin{itemize}
  \item \textbf{\emph{Inclusion}}: $K_{G}H$ (resp. $K_HG$), provided
    $H \subseteq G$;
  \item \textbf{\emph{Additivity}}:
    $K_{G}H \wedge K_GP \to K_G(H\cup P)$ (resp.
    $K_{H}G \wedge K_PG \to K_{H\cup P}G$);
  \item \textbf{\emph{Transitivity}}: $K_GH \wedge K_HP \to K_GP$
    (resp. $K_GH \wedge K_HP \to K_GP$);
  \item \textbf{\emph{Transfer}}:
    $K_GH \wedge \Box_H\varphi \to \Box_G\varphi$ (resp.
    $K_GH \wedge \Box_H\varphi \to \Box_G\varphi$).
  \end{itemize}
\end{proposition}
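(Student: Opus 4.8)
The plan is to verify the four schemata one at a time by unpacking the interpretations of $\Box_G$ and of the dependence atoms directly. Write $m^l_G = (\bigwedge_{a\in G}A_a)\upa$ and $m^r_G = (\bigvee_{a\in G}A_a)\upa$ for the operators interpreting $\Box_G$ in $\mc L^D_{\Sigma_l}$ and $\mc L^D_{\Sigma_r}$ respectively, and recall that $K_GH$ holds at a state $x$ exactly when the operator interpreting $H$ locally depends on the one interpreting $G$, e.g. $(\bigwedge_{a\in H}A_a)\upa \subseteq_x (\bigwedge_{a\in G}A_a)\upa$ in the left fragment. Two background facts will be used throughout. First, the semantic functor is order-reversing: a fibre inequality $A\le B$ in $\mc A_X$ is a morphism over the identity, so applying $(-)\upa$ and unwinding the definition of $\CABAO$-morphisms gives $B\upa\subseteq A\upa$ pointwise. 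Second, group combinations interact well with unions, $\bigwedge_{a\in H\cup P}A_a = (\bigwedge_{a\in H}A_a)\wedge(\bigwedge_{a\in P}A_a)$, and dually for $\bigvee$.

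With this in hand, \emph{Transfer}, \emph{Transitivity} and \emph{Inclusion} are soft. For Transfer, the hypotheses $K_GH$ and $\Box_H\varphi$ at $x$ give $m_H\subseteq_x m_G$ and $x\in m_H(\ass\varphi)$ for the relevant pair of group operators, so instantiating the defining clause of $\subseteq_x$ at the set $S=\ass\varphi$ yields $x\in m_G(\ass\varphi)$, i.e. $\Box_G\varphi$ at $x$; the same step works verbatim in both fragments. For Transitivity, observe that for each fixed $x$ the relation $\subseteq_x$ is a preorder on $\CABAO_X$, so $m_P\subseteq_x m_H$ and $m_H\subseteq_x m_G$ compose to $m_P\subseteq_x m_G$. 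For Inclusion with $H\subseteq G$, monotonicity of the indexed meet gives $\bigwedge_{a\in G}A_a\le\bigwedge_{a\in H}A_a$, whence by order-reversal $m^l_H\subseteq m^l_G$ \emph{globally}, so $K_GH$ holds at every state; the right fragment is dual, using $\bigvee_{a\in H}A_a\le\bigvee_{a\in G}A_a$.

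The one genuinely substantive case is \emph{Additivity}, which I expect to be the main obstacle. Unpacking the left fragment and writing $A_H=\bigwedge_{a\in H}A_a$ etc., the task is to deduce $(A_H\wedge A_P)\upa\subseteq_x A_G\upa$ from $A_H\upa\subseteq_x A_G\upa$ and $A_P\upa\subseteq_x A_G\upa$, using $A_{H\cup P}=A_H\wedge A_P$. The difficulty is that $(-)\upa$ need not preserve meets: functoriality only supplies $(A_H\wedge A_P)\upa\supseteq A_H\upa\cup A_P\upa$, which is the wrong direction for the desired implication, so unlike the previous three schemata this one cannot be read off from order-reversal and transitivity alone. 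The right way to organise the argument is to introduce, for each state $x$, the local preorder $A\preceq_x B$ defined by $A\upa\subseteq_x B\upa$ on the fibre $\mc A_X$; order-reversal shows $A_H\wedge A_P$ is a $\preceq_x$-upper bound of $A_H$ and $A_P$, and Additivity says exactly that it is their $\preceq_x$-\emph{least} upper bound. I would establish this by localising at $x$: pullback along the point $x\colon 1\to X$ preserves meets by the earlier Lemma, so $A_H\wedge A_P$ is computed ``at $x$'' as the meet of the localised data, and the heart of the matter is to check that $(-)\upa$ converts this fibre-meet into the join for $\preceq_x$. This compatibility of the semantic functor with point-localised meets (resp. joins, via pushforwards along $x$ and the reverse local preorder, for $\mc L^D_{\Sigma_r}$) is precisely the crux of the proposition, and is the step where the \emph{completeness} of the fibres, rather than merely their poset structure, genuinely enters.
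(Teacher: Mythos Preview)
Your treatment of \textbf{Inclusion}, \textbf{Transitivity}, and \textbf{Transfer} is correct and essentially the same as the paper's: the paper spells out Inclusion via order-reversal of $(-)\upa$ and dismisses the other two as evident, exactly as you do.

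For \textbf{Additivity} you have correctly isolated the real difficulty---from $A_{H\cup P}=A_H\wedge A_P$ and order-reversal one only obtains $m_H\cup m_P\subseteq m_{H\cup P}$, which points the wrong way---but your proposed repair via pullback along $x\colon 1\to X$ does not close the gap. Pulling back preserves meets in $\mc A$, yes, but the step you call ``the crux'', namely that $(-)\upa$ converts that fibre-meet into the $\preceq_x$-join, is precisely the assertion that the semantic functor preserves binary meets on fibres, and nothing in Definition~\ref{def:semanticfunctor} (a \emph{concrete} functor into $\CABAO$) guarantees this, not even over a one-point set. In fact Additivity fails for some modal categories: take $\mc A\colon\Set\op\to\InfL$ to be the constant functor at the five-element lattice $\{\bot<a,b,c<\top\}$ (so $a\wedge b=\bot$), and let $(-)\upa$ send $\top$ to the constant-$\emptyset$ operator, each of $a,b,c$ to $\mathrm{id}_{\wp(X)}$, and $\bot$ to the constant-$X$ operator; one checks this is a concrete functor to $\CABAO$. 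With $\Sigma=\{1,2,3\}$ and $(A_1,A_2,A_3)=(a,b,c)$ over any nonempty $X$, both $K_{\{3\}}\{1\}$ and $K_{\{3\}}\{2\}$ hold globally (all three operators equal $\mathrm{id}$), yet $K_{\{3\}}\{1,2\}$ fails everywhere since $m_{\{1,2\}}=\bot\upa$ is the constant-$X$ operator. So the proposition needs an extra hypothesis on $(-)\upa$ (e.g.\ that it preserve finite fibre-meets, as happens in $\Kr$, $\Top$, $\Nb$, $\CABAO$ for structural reasons specific to each). The paper's own argument has exactly the same gap: it derives $m_H\cup m_P\subseteq m_{H\cup P}$ and then, in the next sentence, uses the reverse implication $x\in m_{H\cup P}(S)\Rightarrow x\in m_H(S)\cup m_P(S)$ without justification.
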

\begin{proof}
  We only prove the case for $\mc L^D_{\Sigma_l}$; the other case is
  completely dual. Let $(A_a)_{a\in\Sigma}$ be any object in
  $\mc A\mta$ over $X$. Whenever we have
  $H \subseteq G \subseteq \Sigma$, we have
  $A_G = \bigwedge_{a\in G} A_a \subseteq \bigwedge_{a\in H} A_a =
  A_H$, which implies $A_H\upa \subseteq A_G\upa$. Hence, according to
  our definition of the interpretation of the dependence atoms, we
  have $\ass{K_GH} = X$, and this validates \textbf{Inclusion}. For
  any two groups $H,P$, by definition
  $A_{H\cup P} = \bigwedge_{a\in H\cup P} A_a = A_H \wedge A_P$, which
  implies $m_H \cup m_P \subseteq m_{H\cup P}$. Now locally, suppose
  for some $x\in X$ we have $x\in\ass{K_GH}$ and
  $x\in\ass{K_GP}$. Then for any $S\subseteq X$,
  $x \in m_{H\cup P}(S) \nt x \in m_H(S) \cup m_P(S)$. Either
  $x\in m_H(S)$ or $x\in m_P(S)$, we would have $x \in m_G(S)$,
  according to our assumption that $K_GH$ and $K_GP$ locally holds at
  $x$. Hence, the \textbf{Additivity} law also holds. The validity of
  \textbf{Transitivity} and \textbf{Transfer} axioms are evident.
\end{proof}

Up to this point, we have completed our generalisation of group
structure to all the exemplar modal categories in a uniform way, and
identified a set of valid inference rules. The remaining task is then
to identify which part of the semantic structure in topological
categories does the syntactic group-forming operation corresponds
to. Considering our usage of the complete lattice structure of fibres,
the following result should be of no surprise:

\begin{theorem}
  Let $F : \mc A \to \mc B$ be a modal functor between two modal
  categories, and suppose the semantic functor $(-)\upa_{\mc B}$ is
  injective on objects. $F$ preserves arbitrary meets (resp. joins)
  fibre-wise, i.e. the induced functions $F_X : \mc A_X \to \mc B_X$
  on fibres is a morphism in $\InfL$ (resp. $\SupL$) for any set $X$,
  iff it preserves the interpretation of the language
  $\mc L^D_{\Sigma_l}$ (resp. $\mc L^D_{\Sigma_r}$) for any indexed
  set $\Sigma$.
\end{theorem}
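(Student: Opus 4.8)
The plan is to prove only the meet / $\mc L^D_{\Sigma_l}$ case; the join / $\mc L^D_{\Sigma_r}$ case is completely dual, swapping $\bigwedge$ for $\bigvee$ and $\InfL$ for $\SupL$. Two facts about $F$ will be used repeatedly: first, being a modal functor it satisfies $(A)\upa_{\mc A} = (FA)\upa_{\mc B}$ for every object $A$ (Theorem~\ref{thm:modalfuncpreserveL}); second, being concrete it acts entry-wise on the self-product, sending a tuple $(A_a)_{a\in\Sigma}$ in $\mc A\mta_X$ to $(FA_a)_{a\in\Sigma}$ in $\mc B\mta_X$, with $F_X$ its action on each fibre. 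Since every construct here is fibre-small, each fibre is a \emph{small} complete lattice, so ``arbitrary meets'' means set-indexed meets, and we are always free to take the signature $\Sigma$ to be the index set of a given family.

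For the \emph{if} direction I would assume every $F_X$ is meet-preserving and show the group operators are left unchanged. For any $G \subseteq \Sigma$ and any $(A_a)_{a\in\Sigma}$ in $\mc A\mta_X$, meet-preservation gives $F\!\left(\bigwedge_{a\in G} A_a\right) = \bigwedge_{a\in G} FA_a$, and combining this with modal functoriality yields
\[
  \left(\bigwedge_{a\in G} A_a\right)\upa_{\mc A}
  = \left(F\bigwedge_{a\in G} A_a\right)\upa_{\mc B}
  = \left(\bigwedge_{a\in G} FA_a\right)\upa_{\mc B}.
\]
Thus the modality $\Box_G$ denotes the same operator on $\wp(X)$ for $(A_a)_{a\in\Sigma}$ and for its image. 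Since the interpretation of a dependence atom $K_GH$ is determined entirely by these group operators --- it is the maximal subset $U$ with $A_H\upa \subseteq_U A_G\upa$ --- it is preserved as well, and an induction on formula structure exactly as in Theorems~\ref{thm:modalfuncpreserveL} and~\ref{thm:conservextenDSigma} extends this to all of $\mc L^D_{\Sigma_l}$.

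For the \emph{only if} direction I would start from an arbitrary set-indexed family $\set{A_a}_{a\in I}$ in a fixed fibre $\mc A_X$ and aim for $F\!\left(\bigwedge_{a\in I} A_a\right) = \bigwedge_{a\in I} FA_a$. The trick is to let the family itself be the signature: take $\Sigma = I$ and the group $G = \Sigma$, so that evaluating $\Box_\Sigma p$ under every assignment $V(p) = S$ and invoking preservation of the language forces $\left(\bigwedge_{a\in\Sigma} A_a\right)\upa_{\mc A}$ and $\left(\bigwedge_{a\in\Sigma} FA_a\right)\upa_{\mc B}$ to agree on every $S \subseteq X$, i.e.\ as operators. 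Rewriting the left side through modal functoriality as $\left(F\bigwedge_{a\in\Sigma} A_a\right)\upa_{\mc B}$, I obtain two objects of $\mc B_X$ with identical image under $(-)\upa_{\mc B}$.

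This is the step where the hypothesis that $(-)\upa_{\mc B}$ is injective on objects is indispensable, and I expect it to be the crux: preservation of the language only ever pins down the induced \emph{operators}, never the fibre objects, so without injectivity one could conclude at best that $F$ preserves meets up to the identification induced by $(-)\upa_{\mc B}$; injectivity collapses this to the genuine equality $F\!\left(\bigwedge_{a\in\Sigma} A_a\right) = \bigwedge_{a\in\Sigma} FA_a$. Letting $I$ range over all index sets, with $I = \emptyset$ handling the top element via $\Box_\emptyset$, then gives $F_X \in \InfL$ for every $X$. A final remark worth recording is that the dependence atoms play no role in this direction --- the $\Box_G$ fragment alone suffices --- so they merely come along for free, their preservation being guaranteed by the \emph{if} direction.
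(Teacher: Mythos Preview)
Your proposal is correct and follows essentially the same approach as the paper: reduce to showing that the group operator $\left(\bigwedge_{a\in G}A_a\right)\upa$ is unchanged under $F$, rewrite via modal functoriality as $\left(F\bigwedge_{a\in G}A_a\right)\upa_{\mc B}$ versus $\left(\bigwedge_{a\in G}FA_a\right)\upa_{\mc B}$, and then invoke injectivity of $(-)\upa_{\mc B}$ to pass from equality of operators to equality of fibre objects. Your added remarks --- making explicit that one chooses $\Sigma = I$ to realise an arbitrary family, handling the empty meet via $G = \emptyset$, and observing that the dependence atoms are not needed for the only if direction --- are all correct elaborations that the paper leaves implicit.
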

\begin{proof}
  Again, we only prove the case for $F$ preserving meets fibre-wise
  and the preservation of the interpretation of $\mc
  L^D_{\Sigma_l}$. We already know from
  Theorem~\ref{thm:conservextenDSigma} that $F$ is a modal functor iff
  it preserves the interpretation of $\mc L^D_\Sigma$, thus it
  suffices to show it further preserves the interpretation of
  $\bigwedge$-group-formation iff it preserves meets fibre-wise. From
  how the $\bigwedge$-group modality is defined, it is immediate to
  note that $F$ preserves the interpretation of $\mc L^D_{\Sigma_l}$
  iff $\left(\bigwedge_{a\in\Sigma} A_a \right)\upa_{\mc A}$, which by
  the fact of $F$ being a modal functor is equal to
  $\left(F\bigwedge_{a\in\Sigma} A_a\right)\upa_{\mc B}$, coincides
  with $\left(\bigwedge_{a\in\Sigma} FA_a\right)\upa_{\mc B}$, for any
  $(A_a)_{a\in\Sigma}$. By assumption on $(-)\upa_{\mc B}$, this holds
  iff $F\bigwedge_{a\in\Sigma} A_a = \bigwedge_{a\in\Sigma} FA_a$,
  which exactly means $F$ preserves meets fibre-wise.
\end{proof}

Consider the various model transformations we have described in
Example~\ref{exam:modalfunc}, Theorem~\ref{thm:conservextenDSigma}
immediately tells us how these functors behave with respect to group
knowledge. For instance, since the modal embedding $\Eqv \hook \Pre$
has both a concrete left and right adjoint, it must preserve both
meets and joins fibre-wise, which suggests that the two fragments
$\mc L^D_{\Sigma_l}$ and $\mc L^D_{\Sigma_r}$ behave coherently
between $\Eqv$ and $\Pre$. However, as for the embedding of $\Eqv$ and
$\Pre$ into $\Kr$, it only has a concrete left adjoint but lacks a
right one, which means only the $\bigwedge$-group formation, viz. the
distributive knowledge, coincide in $\Eqv,\Pre$ and $\Kr$, but not the
common knowledge. We can see this more explicitly, since the join in
fibres of $\Kr$ are simply unions of relations, while in $\Pre$ and
$\Eqv$ we must further take the transitive closure of unions of
relations. Other modal embeddings can be analysed in a similar
fashion.

\section{Logical Dynamics and Fibre Connections}
\label{sec:dynamics}

The ``dynamic turn'' of modal logic in the recent two decades makes
\emph{logical dynamics} another very important topic in the current
literature. In this section, we will see how certain general types of
logical dynamics could be subsumed into our categorical framework in a
similar fashion as before.

Logical dynamics concerns with the reasoning patterns of agents when
\emph{new information} comes in, which generally \emph{changes the
  underlying set} of a model. This is where the \emph{fibre
  connection} plays a crucial role, because it allows us to transfer
the geometric data over the original model to the updated model in a
uniform way. For simplicity, below we describe all the dynamic
extensions based on the simplest fragment $\mc L$, but it should be
clear that our method can be equally applied to other fragments as
well.

To warm up, we start by generalising the simplest form of dynamic
logic, known as PAL, public announcement logic
(cf.~\cite{plaza1989logics,plaza2007pal}). It concerns with
information events of publicly announcing that $\varphi$ holds, which
we denote as $!\varphi$. A typical formula in PAL is of the form
$[!\varphi]\psi$, intuitively read as $\psi$ is true after announcing
$\varphi$. For a modal category $\mc A$, given any object $(A,V)$ in
the fibre $(\mc A \times \Evl)_X$, the information event $!\varphi$
naturally restricts the domain $X$ to the subset
$S = \assv{\varphi}_A$. If we denote the inclusion function
$S \hook X$ as $i$, then the natural way to transfer the geometric
data on $X$ to $S$ is by pulling back along $i$. This way, we obtain a
new semantic model $(i^*A,i^*V)$ over $S$, and the formula following
the dynamic operator $[!\varphi]$ could be interpreted in this new
model. We also need to transfer subsets of $S$ back to subsets of $X$,
to maintain the recursive structure of adding dynamic operators within
the syntax. The natural candidates are $\exists_i$ and $\forall_i$,
which we will see correspond to the pair of dual operators
$\pair{!\varphi}$ and $[!\varphi]$.

More formally, we define the extension $\mc L\PAL$ of $\mc L$ by the
smallest set of formulas containing $\mc L$ and is closed under
forming dynamic formulas of the form $[!\varphi]\psi$, with
$\varphi,\psi$ in $\mc L\PAL$. Following the above informal idea, we
define the interpretation of formulas in $\mc L\PAL$ by adding the
following recursive clause: For $(A,V)$ in $(\mc A \times \Evl)_X$, we
define $\assv{[!\varphi]\psi}_A = \forall_i\ass{\psi}^{i^*V}_{i^*A}$,
and
$\assv{\pair{!\varphi}\psi}_A = \exists_i\ass{\psi}^{i^*V}_{i^*A}$,
where $i$ is the inclusiong map $\assv{\varphi}_A \hook X$. Perhaps
the more familiar form of truth conditions of these dynamic operators
are the following equivalent local formulation: For any $x\in X$,
\begin{align*}
  A,V,x \models [!\varphi]\psi
  &\eff A,V,x \models \varphi \text{ implies } i^*A,i^*V,x \models \psi, \\
  A,V,x \models
  \pair{!\varphi}\psi
  &\eff A,V,x \models \varphi \text{ and } i^*A,i^*V,x \models \psi.
\end{align*}
Again, if we combine this general form of semantics of PAL in any
modal category with the special description of pullback maps in $\Kr$
given in the proof of Proposition~\ref{prop:alltopocat}, we recover
exactly the usual PAL dynamics developed for Kripke models, but we
also get the PAL dynamics in other types of semantics at the same
time. This again exhibits the usefulness of a unifying description of
semantics of modal logic.

Expectedly, the syntactic PAL dynamic operators in the $\mc L\PAL$
fragment should correspond to the semantic structure of initial lifts
along inclusions in a topological category:

\begin{theorem}
  Let $F : \mc A \to \mc B$ be a modal functor between two modal
  categories, and suppose the semantic functor $(-)\upa_{\mc B}$ is
  injective on objects. $F$ further preserves the interpretation of
  $\mc L\PAL$ iff it preserves the initial lifts of any injections,
  i.e. for any inclusion map $i : S \hook X$ and for any object $A$ in
  the fibre $\mc A_X$, $Fi^*A = i^*FA$ holds.
\end{theorem}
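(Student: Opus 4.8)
The plan is to establish both directions through the single recursive clause defining $\mc L\PAL$. Since $F$ is assumed to be a modal functor, Theorem~\ref{thm:modalfuncpreserveL} already gives that it preserves the interpretation of the static fragment $\mc L$; the only content to analyse is the dynamic operator $[!\varphi]$, whose semantics involves the pullback $i^*$ along the inclusion $i : \assv{\varphi}_A \hook X$ together with the set-theoretic operators $\forall_i,\exists_i$.

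For the \emph{if} direction (preservation of initial lifts along injections implies preservation of $\mc L\PAL$), I would argue by induction on the structure of formulas. The propositional and modal cases are exactly those of Theorem~\ref{thm:modalfuncpreserveL}, so only $[!\varphi]\psi$ needs treatment. The key point is that the inclusion $i$ restricting the domain is determined by $\assv{\varphi}_A$, and the inductive hypothesis applied to $\varphi$ gives $\assv{\varphi}_A = \assv{\varphi}_{FA}$, so the \emph{same} inclusion $i : S \hook X$ with $S = \assv{\varphi}_A$ governs both evaluations. As $\forall_i$ and $\exists_i$ depend only on $i$, it remains to compare the inner interpretations over $S$. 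Applying the inductive hypothesis to $\psi$ over $S$ with the model $(i^*A, i^*V)$ yields $\ass{\psi}^{i^*V}_{i^*A} = \ass{\psi}^{i^*V}_{F(i^*A)}$, and the hypothesis $F(i^*A) = i^*FA$ rewrites the right-hand side as $\ass{\psi}^{i^*V}_{i^*FA}$. Wrapping with $\forall_i$ gives $\assv{[!\varphi]\psi}_A = \assv{[!\varphi]\psi}_{FA}$, closing the induction; the diamond case is identical with $\exists_i$.

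For the \emph{only if} direction I want to recover $Fi^*A = i^*FA$ for an arbitrary inclusion $i : S \hook X$ and object $A \in \mc A_X$ from the syntactic hypothesis. Since $(-)\upa_{\mc B}$ is injective on objects, it suffices to show that the operators $(Fi^*A)\upa_{\mc B}$ and $(i^*FA)\upa_{\mc B}$ on $\wp(S)$ agree. I would pick a fresh variable $q$ and an evaluation $V$ with $V(q) = S$, so that $\assv{q}_A = \assv{q}_{FA} = S$ holds independently of the model and the announcement $!q$ restricts to exactly $S$ along the \emph{fixed} inclusion $i$ in both cases. Testing the formula $[!q]\Box p$ against $V$ with $V(p) = T$ for an arbitrary $T \subseteq S$, and unwinding the definitions — using that pullback of evaluations in $\Evl$ is post-composition with $i\inv$, so $(i^*V)(p) = i\inv T = T$ — the hypothesis $\assv{[!q]\Box p}_A = \assv{[!q]\Box p}_{FA}$ becomes $\forall_i\bigl((i^*A)\upa_{\mc A}(T)\bigr) = \forall_i\bigl((i^*FA)\upa_{\mc B}(T)\bigr)$.

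The final steps strip the outer $\forall_i$, which is legitimate because for an inclusion $\forall_i W = W \cup (X\backslash S)$ is injective on subsets of $S$, yielding $(i^*A)\upa_{\mc A}(T) = (i^*FA)\upa_{\mc B}(T)$; then, $F$ being a modal functor replaces $(i^*A)\upa_{\mc A}$ by $(Fi^*A)\upa_{\mc B}$. As $T$ ranges over all subsets of $S$ the two operators coincide, and injectivity of $(-)\upa_{\mc B}$ on objects gives $Fi^*A = i^*FA$. The main obstacle I anticipate is this backward direction: one must choose a test formula isolating a single pullback $i^*A$ while keeping the announced set identical across $A$ and $FA$ — which is precisely what forces the use of an evaluation variable $q$ fixed to $S$ rather than a structurally defined $\varphi$ — and one must justify cancelling the outer $\forall_i$, relying on its explicit shape for inclusions.
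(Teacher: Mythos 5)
Your proposal is correct and follows essentially the same route as the paper's proof: induction on formulas for the if direction (using the inductive hypothesis on $\varphi$ to fix the same inclusion $i$, then on $\psi$, then the lift-preservation hypothesis), and for the only-if direction a test formula that announces a variable valued at $S$ and then applies a modality to a variable valued at $T$. The only cosmetic difference is that the paper argues the converse contrapositively with $\pair{!p}\Box q$ and $\exists_i$ (where stripping $\exists_i$ is immediate since it is just inclusion of subsets of $S$ into $X$), whereas you argue directly with $[!q]\Box p$ and justify cancelling $\forall_i$ via its explicit form $W \mapsto W \cup (X\backslash S)$ on inclusions.
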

\begin{proof}
  Again for the if direction we prove by induction, and the only case
  we need to think about is for the PAL dynamic operator. Given
  $\varphi,\psi$ and any $(A,V)$ in $(\mc A \times \Evl)_X$, by
  induction hypothesis we have
  $\assv{\varphi}_A = \assv{\varphi}_{FA}$, and we denote the
  inclusion map of this subset into $X$ by $i$. Now by definition of
  the interpretation of $[!\varphi]\psi$, we have
  $\assv{[!\varphi]\psi}_A = \forall_i\ass{\psi}^{i^*V}_{i^*A} =
  \forall_i\ass{\psi}^{i^*V}_{Fi^*A} =
  \forall_i\ass{\psi}^{i^*V}_{i^*FA} =
  \assv{[!\varphi]\psi}_{FA}$. Thus, $F$ preserves the interpretation
  of $\mc L\PAL$.

  On the other hand, suppose for some object $A$ in the fibre
  $\mc A_X$ and for some injection $i : S \hook X$, we have $i^*FA$ is
  \emph{not} equal to $Fi^*A$. This in particular suggests that the
  associated operators $(i^*FA)\upa_{\mc B}$ and $(Fi^*A)\upa_{\mc B}$
  on $S$ are not identical, and they must disagree at some subset $T$
  of $S$. Then let $V$ be an interpretation on $X$ such that
  $V(p) = S$ and $V(q) = T$. Consider the interpretation of the
  formula $\pair{!p}\Box q$. On one hand, we have
  $\assv{\pair{!p}\Box q}_A = \exists_i\ass{\Box q}^{i^*V}_{i^*A} =
  \exists_i\ass{\Box q}^{i^*V}_{Fi^*A} = (Fi^*A)\upa_{\mc B}(T)$. On
  the other hand, we have
  $\assv{\pair{!p}\Box q}_{FA} = \exists_i\ass{\Box q}^{i^*V}_{i^*FA}
  = (i^*FA)\upa_{\mc B}(T)$. By assumption, $(Fi^*A)\upa_{\mc B}(T)$
  does not coincide with $(i^*FA)\upa_{\mc B}(T)$, and thus $F$ does
  not preserve the interpretation of $\mc L\PAL$ by definition.
\end{proof}

For those model transformations that has a concrete left adjoint, they
automatically commutes with all pullback maps, hence preserves the
interpretation of $\mc L\PAL$. Perhaps surprisingly, all of the modal
embeddings described in Example~\ref{exam:modalfunc} actually do
commutes with pullbacks of \emph{injections}, though not all of them
have a concrete left adjoint, and this statement for arbitrary
functions is false. As a result, $\mc L\PAL$ is a particularly nice
fragment of dynamic logic to work with.

However, PAL as dynamic logic is still too restrictive. A much more
powerful dynamic mechanism is \emph{product update} in DEL, dynamic
epistemic logic~\cite{van2007dynamic,liu2011reasoning}. In product
update, information events themselves form a model $E$, which carries
additional geometric data signifying agent's uncertainly about which
event actually happens, and the update is parametrised by $E$. Each
event $e\in E$ is also equipped with a formula $\varphi_e$ that
specifies the precondition of that event happens. For any model over a
set $X$, the updated model is a subset of the product space
$E \times X$, consisting of those pairs $(e,x)$ where $x$ satisfies
the precondition of $e$. The geometric data over the updated set takes
into account the ones on both $X$ and $E$.

There are already several categorical reformulation and generalisation
of DEL in the literature,
e.g. see~\cite{kishida2017categories,cina2017thesis}, but most of them
are based on relational semantics, while our approach applies to
arbitrary topological categories. We first define the notion of a
\emph{product type}, which generalises event models:

\begin{definition}[Product Type]
  A \emph{product type} $\ms E$ for the modal category $\mc A$ is a
  tuple $\pair{E,B,W,\set{\psi_e}_{e\in E}}$, where $E$ is a set, and
  $B,W$ are objects in the fibre $\mc A_E,\Evl_E$. The family
  $\set{\psi_e}_{e\in E}$ is an $E$-indexed family of formulas within
  the language $\mc L$.
\end{definition}
\noindent
The notion of \emph{product type update} we are going to describe,
which generalises DEL, is parametrised by such a product type $\ms
E$. For any semantic model $(A,V)$ in the fibre
$(\mc A \times \Evl)_X$, we write $E \otimes_V X$ as the underlying
set of the updated model, which is given by the dependent sum
$\sum_{e\in E}\assv{\psi_e}_A$. Intuitively, the updated model is
indexed by events in $E$, whose fibre over $e$ is the set of all
possible words satisfying the precondition $\psi_e$. There are then
two natural projection maps $\pi_X : E \otimes_V X \to X$ and
$\pi_E : E \otimes_V X \to E$, and we define the geometric data
$(\ms E \otimes_V A,W \otimes V)$ in the fibre
$(\mc A \times \Evl)_{E \otimes_V X}$ to be $\pi_X^*A \wedge \pi_E^*B$
and $\pi_X^*V \wedge \pi_E^*W$, respectively. A typical dynamic
formula in product type update is of the form $[\ms E,S]\varphi$ or
$\pair{\ms E,S}\varphi$, where $\ms E$ is a product type and $S$ is a
subset of $E$. We define their interpretation as follows,
\[ \assv{[\ms E,S]\Phi}_A := \forall_{\pi_X}\left((S \otimes_V
    X)\to\ass{\Phi}^{W \otimes V}_{E\otimes_VA} \right), \quad
  \assv{\pair{\ms E,S}\Phi}_A := \exists_{\pi_X}\left((S \otimes_V
    X)\cap\ass{\Phi}^{W \otimes V}_{E\otimes_VA} \right), \]
where the set $S \otimes_V X = \sum_{e\in S}\assv{\psi_e}_A$ is a
subset of $E \otimes_V X$, and $\to$, $\cap$ are calculated in the
power set $\wp(E \otimes_V X)$. Again, interpreted our general
construction back in the relational context $\Kr$ of Kripke models,
one immediately recovers the usual product update in DEL.\footnote{In
  the literature, only the case where $S$ is a singleton set $\set e$
  is usually considered, but this is a minor generalisation. It is
  possible to define product update more generally along any function
  mapping into $E$, but we leave that for future work.}

In a word, the way we associate the geometric data on the updated
model $E \otimes_V X$ is by pulling back the ones over $X$ and $E$
along the two projection maps, and then take their intersection in the
fibre. However, a categorically minded reader would perhaps wonder
what happens to the degenerate case where we have an \emph{empty
  intersection}. Though being kind of trivial, this is in fact
important for correspondence results of product type update, which
will be stated later. Hence, we also introduce \emph{empty product
  update}, whose syntactic structure is extremely simple: It is of the
form $\ms U\varphi$, and for any $(A,V)$ in $(\mc A\times\Evl)_X$ we
define $\assv{\ms U\varphi}_{A}$ to be $\assv{\varphi}_{\top_X}$,
where $\top_X$ is the maximal element in $\mc A_X$. This is indeed a
form of dynamics, since the operators $\ms U$ results in the change of
the geometric data, though the update is constant in all cases. We
then define $\mc L\PRO$ to be the least fragment containing $\mc L$,
which is also closed under taking dynamic formulas of empty product
update and product type update.

Now that product type update is properly generalised to arbitrary
topological categories, we can realise PAL dynamics as special case of
product type update. In fact, for any formula $\varphi$, we can
associate it with a product type, which we also denote as
$!\varphi$. Explicitly, $!\varphi$ is the tuple
$\pair{1,\top,\top,\set{\varphi}}$, where $1$ is the singleton set,
and $\varphi$ is the corresponding precondition of the single element
in $1$. It is evident that the updated model by this product type
$!\varphi$ is exactly the one obtained by publicly announcing
$\varphi$ in PAL dynamics. In fact, many other types of dynamics turn
out to be special cases (cf.~\cite{van2007dynamic}).

Now, it should certainly be expected that the dynamic extension
$\mc L\PRO$ corresponds exactly to pullback maps between fibres and
finite meets within fibres. However, for product type update, we need
to slightly modify our definition of preservation of languages, since
now in the syntax of $\mc L\PRO$, we have explicitly included certain
semantic data, viz. the product types $\ms E$. We now say a concrete
functor $F : \mc A \to \mc B$ preserves the interpretation of
$\mc L\PRO$ if, after uniformly changing every product type
$\ms E = \pair{E,B,W,\set{\psi_e}_{e\in E}}$ appearing in the syntax
to $F\ms E = \pair{E,FB,W,\set{\psi_e}_{e\in E}}$, the resulting
interpretation remains unchanged under transformation of models
induced by $F$.\footnote{A far more general approach is to look at the
  relationship between a model transformation induced by a concrete
  functor $F$, and a particular syntactic translation $T$. Our notion
  of preservation of languages is then a special case when $T$ is the
  identity translation, or in the case of product type updates,
  translating $\ms E$ to $F\ms E$. We leave this for future
  investigation.} We then have the following result:

\begin{theorem}
  \label{thm:preserveproducttypeupdate}
  Let $F : \mc A \to \mc B$ be a modal functor between two modal
  categories, and suppose the semantic functor $(-)\upa_{\mc B}$ is
  injective on objects. $F$ further preserves the interpretation of
  $\mc L\PRO$ iff it preserves pullback maps and fibre-wise finite
  meets.
\end{theorem}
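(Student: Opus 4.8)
The plan is to treat the two directions separately, with the ``if'' direction a routine structural induction and the ``only if'' direction the substantive one. For the \textbf{if direction}, assume $F$ preserves pullback maps and fibre-wise finite meets (including the empty meet, so that $F\top_X=\top_X$ for every $X$), and argue by induction on formulas of $\mc L\PRO$. The Boolean and single-modality cases are already covered because $F$ is a modal functor (Theorem~\ref{thm:modalfuncpreserveL}). For the empty product update, $\assv{\ms U\varphi}_A = \assv{\varphi}_{\top_X}$, so preservation follows from $F\top_X=\top_X$ together with the inductive hypothesis for $\varphi$. For product type update along $\ms E = \pair{E,B,W,\set{\psi_e}_{e\in E}}$, I first note that the underlying set $E\otimes_V X = \sum_{e\in E}\assv{\psi_e}_A$ is unchanged, since each $\psi_e\in\mc L$ and $F$ preserves $\mc L$; the decisive point is that the transported geometric datum agrees, i.e. $F(\pi_X^*A\wedge\pi_E^*B) = \pi_X^*FA\wedge\pi_E^*FB$, which is exactly preservation of pullbacks followed by preservation of binary meets. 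Since $F$ is modal the induced operators then coincide, and the outer $\forall_{\pi_X}$, $\exists_{\pi_X}$, the fixed sets $S\otimes_V X$, and the connectives $\cap,\to$ are all computed in unchanged power sets, closing the induction.

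For the \textbf{only if direction}, assume $F$ is modal and preserves $\mc L\PRO$; I would extract the four ingredients in order. First, the empty product update forces $F\top_X=\top_X$: evaluating $\ms U\Box p$ with $V(p)=S$ gives $(\top_X)\upa_{\mc A}(S)$ on one side and $(\top_X^{\mc B})\upa_{\mc B}(S)$ on the other, and since $F$ is modal the left side equals $(F\top_X)\upa_{\mc B}(S)$, so agreement for all $S$ and injectivity of $(-)\upa_{\mc B}$ give $F\top_X=\top_X$. Second, recalling that PAL is the special product type $!\varphi=\pair{1,\top,\top,\set{\varphi}}$, which $F$ now fixes because $F\top=\top$, preservation of $\mc L\PRO$ restricts to preservation of $\mc L\PAL$; the preceding theorem on $\mc L\PAL$ then yields that $F$ commutes with pullback along every injection. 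Third, I would combine these into full pullback preservation via the graph factorisation $f=\pi_X\circ\pair{\id,f}$, where $\pair{\id,f}$ is an injection and $\pi_X$ a projection: by the splitting identity $f^*=\pair{\id,f}^*\circ\pi_X^*$ it then suffices to treat projections, which arise from product updates with all preconditions equal to $\top$. Finally, binary meets follow from a diagonal--section trick: taking $E=X$ with all preconditions $\top$ makes $E\otimes_V X=X\times X$ with the two projections $\pi_X,\pi_E$, and for the diagonal injection $\delta$ one has $\delta^*(\pi_X^*A\wedge\pi_E^*B)=A\wedge B$, since $\pi_X\delta=\pi_E\delta=\id_X$ and pullback maps preserve meets; applying $F$ and using preservation of pullback along $\delta$ reduces $F(A\wedge B)=FA\wedge FB$ to the object-level identity $F(\pi_X^*A\wedge\pi_E^*B)=\pi_X^*FA\wedge\pi_E^*FB$.

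The \textbf{main obstacle} is precisely establishing that last object-level identity --- and the analogous one for projections --- from formula-level preservation. The difficulty is that the product-update clauses only ever expose the transported operator wrapped inside $\exists_{\pi_X}$ or $\forall_{\pi_X}$, and the evaluations $W\otimes V=\pi_X^*V\wedge\pi_E^*W$ realisable on $E\otimes_V X$ are ``rectangular'', so one cannot naively feed an arbitrary subset into the operator and read off its value. My plan to overcome this is to localise onto the fibres of $\pi_X$, where $\pi_X$ is injective: choosing the event-evaluation $W$ to select a single $e\in E$ and announcing it through a nested PAL operator restricts attention to the fibre over $e$, on which $\exists_{\pi_X},\forall_{\pi_X}$ become transparent and arbitrary subsets of the fibre are realisable. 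Preservation of pullback along the fibre inclusion, already secured in the second step, then recovers the transported operator fibrewise, and injectivity of $(-)\upa_{\mc B}$ assembles these pieces into the desired object-level equality.
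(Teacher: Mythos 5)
Your ``if'' direction and the first two steps of your ``only if'' direction (the empty update forcing $F\top_X=\top_X$; PAL as the product type $\pair{1,\top,\top,\set\varphi}$ giving commutation with pullbacks along injections) are sound, and reducing general pullbacks to projections via the graph factorisation is a legitimate idea. The gap is exactly at the point you flag as the main obstacle, and the localisation you propose does not close it. If you restrict the updated model $E\times X$ (all preconditions $\top$) to a single fibre $\set e\times X$ along the inclusion $j_e$, then since $\pi_X\circ j_e$ is a bijection onto $X$ and pullback splits, both $j_e^*(F\pi_X^*A)$ and $j_e^*(\pi_X^*FA)$ are (isomorphic to) $FA$: the restriction destroys precisely the information that could distinguish the two objects, and in any case an object over $E\times X$ in a topological category is not determined by its restrictions to the slices (already false in $\Top$), so ``assembling the pieces fibrewise'' cannot yield the object-level identity. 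If instead you keep the operator on all of $E\times X$ and only cut down to a fibre after applying it, you can only ever feed it sets of the form $(W\otimes V)(q)=\pi_E\inv W(q)\cap\pi_X\inv V(q)$, i.e.\ rectangles; two distinct operators in $\CABAO_{E\times X}$ can agree on all rectangles, so injectivity of $(-)\upa_{\mc B}$ does not finish the argument.

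The paper's construction avoids the product $E\times X$ altogether. For an arbitrary function $\pi:E\to X$ it takes the product type $\pair{E,\top^{\mc A}_E,W,\set{p_e}_{e\in E}}$ together with $V(p_e)=\set{\pi(e)}$, so that $E\otimes_V X=\sum_{e\in E}\set{\pi(e)}$ is the graph of $\pi$, identified with $E$ itself, with $\pi_E=1_E$ and $\pi_X=\pi$; the transported datum is then exactly $\pi^*A$ (resp.\ $\pi^*FA$), and since $(W\otimes V)(q)=W(q)\cap\pi\inv V(q)=W(q)$ once $V(q)=X$, the atom $q$ realises an \emph{arbitrary} subset $T\subseteq E$, so the formula $\pair{\ms E,\set e}\Box q$ probes the two operators at any point and any subset. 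The same device with preconditions $p_x$ and $V(p_x)=\set x$ collapses $X\otimes_V X$ to the diagonal and directly exhibits $A\wedge B$ against $FA\wedge FB$. This single construction handles arbitrary $\pi$ in one stroke, so neither the PAL detour nor the graph factorisation is needed; the idea missing from your outline is to encode the restriction to the graph (or diagonal) \emph{into the preconditions of the product type} rather than performing it after the update.
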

\begin{proof}
  The if part can be proven by a straight forward induction on the
  complexity of formulas in $\mc L\PRO$. The only if part is
  technically trickier, though the general idea is no different from
  previous proofs of such correspondence results. We include a
  detailed proof in Appendix~\ref{app} for the convenience of
  referees.
\end{proof}

\section{Conclusion}
In this paper, we have used the language of topological categories to
provide a unifying description of different types of semantics of
modal logic, and have showed how various semantic structures within
topological categories enable us to interpret different extensions of
modal logic, including modal strength, group structure, and logical
dynamics. We believe our approach is instructive for the current
active research in the modal logic world on related topics.

For each fragment we have also proven a correspondence result, showing
the equivalence for a concrete functor to preserve the interpretation
of that fragment and for it to preserve certain categorical
structures. Such results have established a close connection between
the syntax and semantics of modal logic, and have deepened our
understanding of its abstract mathematical structures. They can be
seen as justification that topological category is a particularly nice
framework to explore its further connections with modal logic.

\subsection*{Acknowledgement}
In the process of preparing this paper, we are in great debt to many
extremely useful suggestion and constructive comments provided by
Johan van Benthem and Levin Hornischer. We would also like to thank
the annonymous referees for the helpful advice on the content and the
presentation of this paper.

\bibliographystyle{eptcs}
\bibliography{mybib}

\appendix
\section{Proof of Theorem~\ref{thm:preserveproducttypeupdate}}
\label{app}
To complete the other half of the proof, we roughly need to show that
any initial lift and any finite meets in the fibre could be
represented by some product type update with a specific chosen product
type. First of all, since empty product type update is included in our
dynamic extension $\mc L\PRO$, to preserve it we may assume $F$
already preserves the top element within each fibre. We first show
that $F$ commutes with pullback maps. Suppose for some function
$\pi : E \to X$, $F$ does not commute with the initial lifts on $\pi$
in $\mc A$ and $\mc B$. This means that we have some object $A$ in the
fibre $\mc A_X$ , such that $F\pi^*A$ and $\pi^*FA$ are two distinct
objects in $\mc B_X$. Now since the semantic functor on $\mc B$ is
injective on objects, the induced operators $(\pi^*FA)\upa_{\mc B}$,
which we denote as $m$, and $(F\pi^*A)\upa_{\mc B}$, which we denote
as $m'$, will be distinct, which means they disagree on some subset
$T$ of $E$.

Now consider the product type
$\ms E = \pair{E,\top^{\mc A}_E,W,\set{p_e}_{e\in E}}$, where the
family of formulas is an $E$-indexed family of distinct propositional
letters. For the evaluation function $W$ on $E$, we require that for
some propositional letter $q$ distinct from $p_e$ for any $e\in E$, we
have $W(q) = T$. Now consider an evaluation function $V$ on $X$, such
that for any $e \in E$ we have $V(p_e) = \set{\pi(e)}$, which means
that $\assv{p_e}_A$ is a singleton for any $e \in E$. We also requires
that $V(q) = X$. Then by definition, we have
\[ E \otimes_V X = \sum_{e\in E}\assv{p_e}_A = E, \] and it is not
hard to see that the projection map $\pi_E$ is the identity on $E$,
and $\pi_X$ is simply given by $\pi$. Notice that, the above statement
of the underlying set of the updated model remains true even if we
have calculated it in $\mc B$.

Now by definition, the geometric data on the updated model is
calculated as follows,
\[ \ms E \otimes_V A = 1_E^*\top^{\mc A}_E \wedge \pi^*A = \pi^*A, \]
and for the induced product update in $\mc B$,
\[ F\ms E \otimes_V FA = 1^*_EF\top^{\mc A}_E \wedge \pi^*FA =
  \top^{\mc B}_E \wedge \pi^*FA = \pi^*FA. \]
The above uses the fact that initial lifts preserves top elements
since it is a right adjoint, and the assumption that $F$ preserves top
elements in the fibres. As for the evaluation function $W \otimes V$,
it is easy to calculate that
\[ (W \otimes V)(q) = W(q) \wedge \pi\inv V(q) = W(q) = T. \] Finally,
consider the interpretation of the formula $\pair{\ms E,\set e}\Box
q$, where $e$ is some element in $E$ such that $e \in m(T)$ but $e
\not\in m'(T)$ (or the other way around). Then by definition, we have
the following calculation,
\[ \assv{\pair{\ms E,\set e}\Box q}_A = \exists_{\pi}(\set{(e,\pi(e))}
\cap \ass{\Box q}^{W \otimes V}_{\pi^*A}) =
\exists_{\pi}(\set{(e,\pi(e))} \cap \ass{\Box q}^{W \otimes
V}_{F\pi^*A}) = \emptyset. \] The first equality is due to the fact
that $E \otimes_V A = \pi^*A$ as we have shown above; the second
equality is by the fact that $F$ preserves the interpretation of $\mc
L$; and the final equality holds because we have assumed $e \not\in
m'(T)$. On the other hand, we have the other calculation as follows,
\[ \assv{\pair{F\ms E,e}\Box q}_{FA} = \exists_{\pi}(\set{(e,\pi(e))}
\cap \ass{\Box q}^{W \otimes V}_{F\ms E \otimes_V FA}) =
\exists_{\pi}(\set{(e,\pi(e))} \cap \ass{\Box q}^{W \otimes
V}_{\pi^*FA}) = \set{\pi(e)}. \] These calculation are basically the
same as before, only that in the final step, the result is a singleton
$\set{\pi(e)}$ because $e \in m(T)$. This constructions shows that $F$
would then not preserve the interpretation of the formula $\pair{\ms
E,\set e}\Box q$ on this particular model. Hence, $F$ must preserves
the initial lift of any single structured sources.

Furthermore, we need to show that $F$ preserves the binary meets
fibre-wise as well. The basic idea is the same. Suppose $F$ does not
preserve binary meets in the fibre, then for some set $X$ and some
$A$, $B$ in the fibre $\mc A_X$, we would have $F(A \wedge B)$
distinct from $FA \wedge FB$ in $\mc B_X$. Again, the operators $m,m'$
associated to $F(A \wedge B)$ and $FA \wedge FB$ would differ on some
subset $T$ of $X$; we let $y\in X$ be the element in $m(T)$ but not
$m'(T)$ (or the other way around). We can then construct the product
type $\ms X$ as follows,
\[ \ms X = \pair{X,B,\top^{\Evl}_X,\set{p_x}_{x\in X}}. \] We also
consider the model $A$ on $X$, with a chosen evaluation function $V$
satisfying the following condition: For any $x \in X$, we have $V(p_x)
= \set x$, and for another distinct variable $q$ we have $V(q) =
T$. The product type update would result in the following model,
\[ X \otimes_V X = \sum_{x\in X}\assv{p_x}_A = X, \] and the two
projection maps are both the identity function $1_X$ on $X$. Again,
this is independent of the modal categories $\mc A$ or $\mc B$. The
topology categorical structure on the updated model, calculated in
$\mc A$, is simply given as follows,
\[ \ms X \otimes_V A = 1_X^*B \wedge 1_X^*A = A \wedge B. \] In the
modal category $\mc B$ however, we have
\[ F\ms X \otimes_V FA = 1^*XFB \wedge 1^*XFA = FA \wedge FB. \] In
both cases, it is easy to see that the updated evaluation function
$\top^{\Evl}_X \otimes V$ remains to be $V$ itself.

By definition, consider the evaluation of the formula $\pair{\ms
X,\set y}\Box q$. On one hand,
\[ \assv{\pair{\ms X,\set y}\Box q}_A = \set y \cap \assv{\Box q}_{A
\wedge B} = \set y \cap \assv{\Box q}_{F(A \wedge B)} = \set y \cap
m(T) = \set{y}. \] On the other hand,
\[ \assv{\pair{F\ms X,\set y}\Box q}_{FA} = \set y \cap \assv{\Box
q}_{FA \wedge FB} = \set y \cap m'(T) = \emptyset. \] Hence, this
explicitly constructs a formula where $F$ does not preserve its
interpretation, and this completes the proof.

\end{document}